\definecolor{darkgreen}{rgb}{0, 0.40, 0}
\newcommand{\calA}{\mathcal{A}}
\newcommand{\calC}{\mathcal{C}}
\newcommand{\calD}{\mathcal{D}}
\newcommand{\calF}{\mathcal{F}}
\newcommand{\calG}{\mathcal{G}}
\newcommand{\calH}{\mathcal{H}}
\newcommand{\calK}{\mathcal{K}}
\newcommand{\calL}{\mathcal{L}}
\newcommand{\calN}{\mathcal{N}}
\newcommand{\calP}{\mathcal{P}}
\newcommand{\DD}{\mathbb{D}}
\newcommand{\EE}{\mathbb{E}}
\newcommand{\FF}{\mathbb{F}}
\newcommand{\GG}{\mathbb{G}}
\newcommand{\HH}{\mathbb{H}}
\newcommand{\II}{\mathbb{I}}
\newcommand{\RR}{\mathbb{R}}
\newcommand{\ZZ}{\mathbb{Z}}
\renewcommand{\setminus}{{\smallsetminus}}
\newcommand{\Id}{\operatorname{Id}}
\newcommand{\st}{\mathbin{\mid}} %%% \mathbin = binary operator,
\newcommand{\from}{\colon} % As in ``f maps _from_ X _to_ Y''.
\newcommand{\homeo}{\mathrel{\cong}} % Homeomorphism
\newcommand{\isom}{\cong} % Isomorphism
\newcommand{\cross}{\times}
\newcommand{\closure}[1]{{\overline{#1}}}
\newcommand{\neigh}{\operatorname{neigh}} % Open regular neighborhood
\newcommand{\Neigh}{\closure{\neigh}} % Closed regular neighborhood
\newcommand{\bdy}{\partial} % Boundary
\newcommand{\dual}{\operatorname{dual}} 
\newcommand{\link}{\operatorname{link}}
\newcommand{\MCG}{\mathcal{MCG}} % Mapping class group
\newcommand{\Homeo}{\operatorname{Homeo}} % Homeomorphism group
\newcommand{\Aut}{\operatorname{Aut}} % Automorphism group
\newcommand{\PGL}{\operatorname{PGL}} % Projective general linear
\newcommand{\refsec}[1]{Section~\ref{Sec:#1}}
\newcommand{\refthm}[1]{Theorem~\ref{Thm:#1}}
\newcommand{\reflem}[1]{Lemma~\ref{Lem:#1}}
\newcommand{\refclm}[1]{Claim~\ref{Clm:#1}}
\newcommand{\reffig}[1]{Figure~\ref{Fig:#1}}
\newcommand{\refdef}[1]{Definition~\ref{Def:#1}}
\newcommand{\refprob}[1]{Problem~\ref{Prob:#1}}
\theoremstyle{plain}
\numberwithin{equation}{section} 
\newtheorem{theorem}[equation]{Theorem}
\newtheorem{lemma}[equation]{Lemma}
\theoremstyle{definition}
\newtheorem{definition}[equation]{Definition}
\newtheorem*{remark*}{Remark}
\newtheorem{claim}[equation]{Claim}
\newtheorem*{claim*}{Claim}
\newtheorem{problem}[equation]{Problem}
\newtheorem*{question*}{Question}
\newtheorem*{answer*}{Answer}
\newtheorem*{application*}{Application}
\newcommand{\fakeenv}{} %%% \fakeenv prints the emptystring
\newenvironment{restate}[2]  %%% restate takes two arguments 
{ 
 \renewcommand{\fakeenv}{#2} %%% Now \fakeenv prints #2
 \theoremstyle{plain} 
 \newtheorem*{\fakeenv}{#1~\ref{#2}} %%% so now #2 is the name of a
                                     %%% theorem-like environment.   
 \begin{\fakeenv}
}
{
 \end{\fakeenv}
}
\newcommand{\join}{\mathbin{\vee}}
\newcommand{\cut}{\operatorname{cut}}
\newcommand{\NonSep}{\operatorname{NonSep}}
\newcommand{\CG}{\mathcal{CG}}
\newcommand{\handle}{\operatorname{handle}}
\newcommand{\pants}{\operatorname{pants}}
\begin{document}

\title{Automorphisms of the disk complex}

\author{Mustafa Korkmaz}
\address{\hskip-\parindent
        Department of Mathematics\\
	Middle East Technical University\\
	06531 Ankara, Turkey}
\email{korkmaz@arf.math.metu.edu.tr}

\author{Saul Schleimer}
\address{\hskip-\parindent
        Department of Mathematics\\
        University of Warwick\\
        Coventry, CV4 7AL, UK}
\email{s.schleimer@warwick.ac.uk}

\thanks{This work is in the public domain}

\date{\today}

\begin{abstract}
We show that the automorphism group of the disk complex is isomorphic
to the handlebody group.  Using this, we prove that the outer
automorphism group of the handlebody group is trivial.
\end{abstract}

%%% MSC: 57M99, 20E36

\maketitle

\section{Introduction}

We show that the automorphism group of the disk complex is isomorphic
to the handlebody group.  Using this, we prove that the outer
automorphism group of the handlebody group is trivial.  These results
and many of the details of the proof are inspired by Ivanov's
work~\cite{Ivanov97} on the mapping class group and the curve complex.

%%% Dan M: ``I might make the point that this is the first result of
%%% its kind where the automorphism group is not the full mcg.''  

Let $V = V_{g,n}$ be the genus $g$ handlebody with $n$ {\em spots}: a
regular neighborhood of a finite, polygonal, connected graph in
$\RR^3$ with $n$ disjoint disks chosen on the boundary.  See
\reffig{V_gnExample} for a picture of $V_{2,2}$.  We write $V = V_g$
when $n = 0$.  Let $\bdy_0 V$ denote the union of the spots.  Let
$\bdy_+ V$ be the closure of $\bdy V \setminus \bdy_0 V$.  So $\bdy_+
V \homeo S = S_{g,n}$ is a compact connected orientable surface of
genus $g$ with $n$ boundary components.  We write $S = S_g$ when $n =
0$.  Define $e(V) = -\chi(\bdy_+ V) = 2g - 2 + n$.

\begin{figure}[ht]
\psfrag{}{}
\psfrag{}{}
$$\begin{array}{c}
\epsfig{file=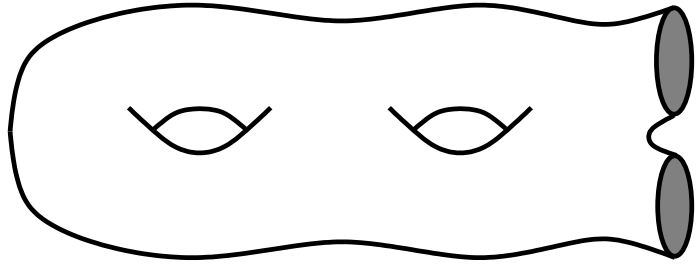, height = 3.5 cm}
\end{array}$$
\caption{A genus two handlebody with two spots.}
\label{Fig:V_gnExample}
\end{figure}

A simple closed curve $\alpha$ in $S = S_{g,n}$ is {\em inessential}
if it cuts a disk off of $S$; otherwise $\alpha$ is {\em essential}.
The curve $\alpha$ is {\em peripheral} if it cuts an annulus off of
$S$; otherwise $\alpha$ is {\em non-peripheral}.  A properly embedded
disk $D$ in $V = V_{g,n}$, with $\bdy D \subset \bdy_+ V$, is {\em
essential} or {\em non-peripheral} exactly as its boundary is in
$\bdy_+ V$.  We require any proper isotopy of $D \subset V$ to have
track disjoint from the spots of $V$.  This yields a proper isotopy of
$\bdy D$ in $\bdy_+ V$.

\begin{definition}[Harvey~\cite{Harvey81}]
\label{Def:CurveComplex}
The {\em curve complex} $\calC(S)$ is the simplicial complex with
vertex set being isotopy classes of essential, non-peripheral curves
in $S$.  The $k$--simplices are given by collections of $k+1$ vertices
having pairwise disjoint representatives.
\end{definition}

\begin{definition}[McCullough~\cite{McCullough91}]
\label{Def:DiskComplex}
The {\em disk complex} $\calD(V)$ is the simplicial complex with
vertex set being proper isotopy classes of essential, non-peripherial
disks in $V$.  The $k$--simplices are given by collections of $k+1$
vertices having pairwise disjoint representatives.
\end{definition}
%%% McCullough credits Kramer with the basic idea.

Note that there is a natural inclusion $\calD(V) \to \calC(\bdy_+ V)$
taking a disk to its boundary.  This map is simplicial and injective.

%%% Simplicial because disjointness is preserved. Injective because a
%%% disk is determined by its boundary.  The map is not surjective.

If $\calK$ is a simplicial complex then $\Aut(\calK)$ denotes the
group of simplicial automorphisms of $\calK$.  The elements of
$\Aut(\calC(S))$ and $\Aut(\calD(V))$ are to be contrasted with
mapping classes on the underlying spaces.

\begin{definition}
\label{Def:MCG}
The {\em mapping class group} $\MCG(S)$ is the group of homeomorphisms
of $S$, up to isotopy.  The {\em handlebody group} $\calH(V)$ is the
group of homeomorphisms of $V$, fixing the spots setwise, up to spot
preserving isotopy.
\end{definition}

Some authors refer to our $\MCG(S)$ as the {\em extended} mapping
class group, as orientation reversing homeomorphisms are allowed.
Note there is a natural map $\calH(V) \to \MCG(\bdy_+ V)$ which takes
$f \in \calH(V)$ to $f|\bdy_+ V$.  Again, this map is an injective
homomorphism.

%%% Homomorphism because restriction commutes with composition.
%%% Injective: if $\bdy f = \Id$ then $f$ fixes all disks.  Now apply
%%% the Alexander trick to isotope $f$ to the identity.

%%% Map doesn't get a name, because we will spend a lot of time
%%% composing geometric autmorphisms and arbitrary automorphisms. 

Note finally that there is a natural homomorphism $\MCG(S) \to
\Aut(\calC(S))$ (and similarly for $V$).  We will call any element of
the image of this map a {\em geometric automorphism}.  Our main
theorem is:

\begin{restate}{Theorem}{Thm:AutOfDiskComplex}
If a handlebody $V = V_{g,n}$ satisfies $e(V) \geq 3$ then
the natural map $\calH(V) \to \Aut(\calD(V))$ is a surjection.
\end{restate}

In the language above: every element of $\calH(V)$ is geometric.  The
plan of the proof of \refthm{AutOfDiskComplex} is given in
\refsec{Sketch} and completed in \refsec{CrawlDisk}.  \refsec{Small}
shows that \refthm{AutOfDiskComplex} is sharp; all handlebodies $V$
with $e(V) \leq 2$ exhibit some kind of exceptional behaviour.
\refthm{AutOfDiskComplex} has a corollary:

\begin{restate}{Theorem}{Thm:AutOfDiskComplexII}
If a handlebody $V = V_{g,n}$ satisfies $e(V) \geq 3$ then
the natural map $\calH(V) \to \Aut(\calD(V))$ is an isomorphism.
\end{restate}

In \refsec{AutOfHandlebodyGroup} we use \refthm{AutOfDiskComplex} to
prove:

\begin{restate}{Theorem}{Thm:AutOfHandlebodyGroup}
If $e(V) \geq 3$ then the outer automorphism group of the
handlebody group is trivial.
\end{restate}

These results are inspired by work of Ivanov, Korkmaz and
Luo~\cite{Ivanov97, Korkmaz99, Luo00}:

\begin{theorem}
\label{Thm:Ivanov}
If $3g - 3 + n \geq 3$, or if $(g, n) = (0, 5)$, then all elements of
$\Aut(\calC(S_{g,n}))$ are geometric.  Also, the outer automorphism
group of $\MCG(S)$ is trivial.  \qed
\end{theorem}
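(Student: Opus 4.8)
The plan is to follow Ivanov's strategy: reconstruct enough of the topology of $S$ from the abstract simplicial structure of $\calC(S)$ so that any $\phi \in \Aut(\calC(S))$ must be induced by a homeomorphism. The central principle is that topological relations between curves ought to be expressible purely combinatorially, in terms of adjacency (\ie\ disjointness) in $\calC(S)$, and are therefore preserved by $\phi$. First I would characterize the topological type of each vertex: one shows that whether a curve is separating or non-separating, together with the homeomorphism types of the pieces it cuts off, can be detected from the local structure of $\calC(S)$ --- for instance via the homeomorphism type of the link of the vertex, or of the small subcomplexes spanned by it together with disjoint curves. Hence $\phi$ preserves these types. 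The hypothesis $3g - 3 + n \geq 3$, with the sporadic exception $(g,n) = (0,5)$, is exactly what makes these local characterizations unambiguous; the low-complexity surfaces fail them and account for the excluded cases.

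Next I would recover finer invariants, in particular geometric intersection number one and two, by recognizing the combinatorial signature of two curves that fill a one-holed torus or a four-holed sphere. Top-dimensional simplices of $\calC(S)$ are pants decompositions, and the elementary moves relating them correspond to controlled combinatorial moves; thus $\phi$ permutes pants decompositions and respects these moves. Assembling this data, one shows that $\phi$ prescribes a consistent gluing pattern, \ie\ a homeomorphism $f$ of $S$ with $f_* = \phi$ on the vertex set. Since a mapping class is determined by its action on isotopy classes of curves, this yields that $\phi$ is geometric, proving the first assertion.

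For the triviality of $\Out(\MCG(S))$, the plan is to transfer the problem back to $\calC(S)$. Given $\psi \in \Aut(\MCG(S))$, the key step is to show that $\psi$ carries Dehn twists to Dehn twists; this rests on an algebraic characterization of twists inside $\MCG(S)$, for example through the structure of their centralizers and their commutation relations. Because twists about two non-separating curves commute precisely when the curves are disjoint, $\psi$ then induces an automorphism of $\calC(S)$, which by the first part equals $f_*$ for some $f \in \MCG(S)$. One finally checks that $\psi$ and conjugation by $f$ agree on every Dehn twist; as twists generate $\MCG(S)$, the two automorphisms coincide, so $\psi$ is inner.

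The main obstacle is the combinatorial reconstruction at the heart of the first part: proving that topological type and low intersection numbers are genuinely encoded in the bare adjacency data of $\calC(S)$, and then globalizing these local recognitions into an honest homeomorphism. This is where the delicate case analysis concentrates, and where the complexity bound $3g - 3 + n \geq 3$ and the exceptional sphere $S_{0,5}$ necessarily enter.
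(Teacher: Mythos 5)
The paper offers no proof of \refthm{Ivanov} at all: it is quoted from Ivanov, Korkmaz and Luo and closed with an immediate \qed, so there is no internal argument to compare yours against. With that said, your outline is an accurate summary of the strategy those references actually follow, and it is also the template the present paper imitates for the disk complex: characterize topological types of vertices through the combinatorics of their links, detect small intersection numbers combinatorially (this is where the pentagon lemma of Korkmaz and Luo enters, the analogue of \reflem{Pentagon} here), upgrade the vertex map to a homeomorphism, and then deduce $\Out(\MCG(S))=1$ by algebraically recognizing Dehn twists via centralizers and the fact that twists commute exactly when the curves are disjoint (compare Lemmas \ref{Lem:FarCommutivity}--\ref{Lem:Powers}). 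The caveat is that what you have written is a plan, not a proof: every load-bearing step is introduced with ``one shows'' or ``one finally checks,'' and those are precisely the places where the long case analyses of the cited papers live --- in particular the uniqueness of the join decomposition of links, the treatment of the sporadic surfaces that forces the hypothesis $3g-3+n\geq 3$ and the exception $(g,n)=(0,5)$, and, in the second part, pinning down $\psi(T_a)$ as a genuine twist (not merely a power or inverse of one) via $hT_ah^{-1}=T_{h(a)}$ and the injectivity of $T_a^n=T_b^m\Rightarrow a=b,\ n=m$. As a description of the known proof your proposal is correct in outline; as a self-contained argument it defers all of the actual content.
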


\section{Background}
\label{Sec:Background}

The genus zero case of \refthm{Ivanov} is contained in the thesis of
the first author~\cite[Theorem 1]{Korkmaz99}.

\begin{theorem}
\label{Thm:Korkmaz}
If $g = 0$ and $n \geq 5$ then all elements of $\Aut(\calC(S_{0,n}))$
are geometric. \qed
\end{theorem}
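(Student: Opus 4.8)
The plan is to run an Ivanov-style argument: recognize the topological type of each vertex from the simplicial structure alone, reduce to smaller surfaces by induction on $n$, and reassemble the resulting local homeomorphisms into a single mapping class. Throughout, recall that every essential curve in $S_{0,n}$ is separating, so a vertex $v \in \calC(S_{0,n})$ determines a partition of the $n$ punctures into parts of sizes $k$ and $n-k$ with $2 \le k \le n-2$; call $\min(k, n-k)$ the \emph{type} of $v$. The key point is that an automorphism must preserve type, even though (crucially) a curve is \emph{not} determined by its partition.

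First I would compute links. If $v$ corresponds to the partition $(k, n-k)$, then cutting along $v$ yields $S_{0,k+1} \sqcup S_{0,n-k+1}$ (the curve becomes a new boundary on each side), and the curves disjoint from $v$ are exactly those supported in one side or the other. Since any curve on one side is disjoint from any curve on the other,
\[
\link(v) \cong \calC(S_{0,k+1}) \ast \calC(S_{0,n-k+1}).
\]
When $k = 2$ the pair-of-pants side contributes the empty complex, so $\link(v) \cong \calC(S_{0,n-1})$, which has infinite diameter (for $n = 5$ this is the Farey graph, and for $n \ge 6$ it is infinite-diameter by the standard theory). When $3 \le k \le n-3$ both join factors are nonempty, so $\link(v)$ is a nontrivial join and hence $\diam(\link(v)) \le 2$. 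Thus $v$ has type $(2,n-2)$ if and only if $\diam(\link(v)) = \infty$ --- a purely combinatorial condition --- so every $\phi \in \Aut(\calC(S_{0,n}))$ preserves the set of type-$(2,n-2)$ vertices. (Iterating the same dichotomy inside the join factors recovers the remaining types if needed.)

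Now induct on $n$. The base case $n = 5$ must be treated by hand: here $\calC(S_{0,5})$ is a graph (pants decompositions have two curves) with every vertex of type $(2,3)$ and every link a Farey graph, and a direct combinatorial analysis identifies $\Aut(\calC(S_{0,5}))$ with the image of $\MCG(S_{0,5})$; this is the exceptional case singled out in \refthm{Ivanov}. For the inductive step $n \ge 6$, fix a type-$(2,n-2)$ vertex $v$; then $\phi(v)$ is again of type $(2,n-2)$ and $\phi$ restricts to an isomorphism $\link(v) \to \link(\phi(v))$, that is, to an automorphism of $\calC(S_{0,n-1})$. By the induction hypothesis this restriction is geometric, realized by a homeomorphism of the corresponding cut surface. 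I would carry this out simultaneously over all such $v$, using the overlaps between links (curves lying in two different pair-of-pants complements) to force the local homeomorphisms to agree wherever their domains meet.

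The final, and hardest, step is the gluing: patch these compatible local homeomorphisms into a single homeomorphism $h$ of $S_{0,n}$ and verify $h_\ast = \phi$ on the whole complex. Concretely I would fix a reference pants decomposition, realize $\phi$ on it, and then ``crawl'' across elementary moves between pants decompositions, checking at each move that the locally-defined realization extends consistently; since $\calC(S_{0,n})$ is connected this propagates the agreement $h_\ast = \phi$ everywhere. The main obstacle is exactly this consistency bookkeeping --- ensuring the independently-obtained local homeomorphisms assemble into one global map inducing $\phi$ --- together with the separate combinatorial analysis required for the base case $n = 5$.
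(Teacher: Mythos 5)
The paper offers no proof of this statement: it is quoted from the first author's thesis, \cite[Theorem 1]{Korkmaz99}, and asserted without argument, so the only meaningful comparison is with that cited proof. Your link computation and type-recognition step (a vertex has type $(2,n-2)$ if and only if its link is not a nontrivial join, equivalently has infinite diameter) are correct and do appear in essentially this form in the literature. One small slip: with the standard disjointness definition, $\calC(S_{0,4})$ is an infinite discrete set of vertices, not the Farey graph --- the Farey graph only appears after one adds edges for intersection number two. This is harmless for your diameter criterion but is exactly the point on which the base case turns.

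The genuine gaps are the two places you yourself defer. First, the base case $n=5$ is the heart of the theorem, and ``a direct combinatorial analysis'' is not an argument: since $\calC(S_{0,5})$ records only disjointness, the essential difficulty is to recover geometric intersection number two from the simplicial structure, and the known solution is the pentagon lemma (\cite[Theorem 3.2]{Korkmaz99}, \cite[Lemma 4.2]{Luo00}), which the present paper itself imports as \reflem{Pentagon}. Without it, or a substitute, the base case is unproved, and the induction has nothing to stand on. Second, the inductive step's gluing is underspecified: realizing each restriction $\phi|\link(v)$ by a homeomorphism of the cut surface $S_{0,n-1}$ gives a map that a priori need not respect the distinguished boundary component created by cutting along $v$, and agreement of the induced maps on overlaps of curve complexes does not by itself force the underlying homeomorphisms to agree as maps of subsurfaces. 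The standard repair --- used in \cite{Korkmaz99} and mirrored in this paper's treatment of $\calD(V)$ --- is not to glue local homeomorphisms but to show $\phi$ preserves the intersection-number-two relation, compose with a single geometric automorphism so that the result fixes a suitable configuration of curves pointwise, and then crawl along a connected complex of such configurations to conclude the composition is the identity. Your last paragraph gestures at this, but as written neither the base case nor the assembly step closes.
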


Spotted balls are the simplest handlebodies.  Accordingly:

\begin{lemma}
\label{Lem:SpottedBalls}
The natural maps $\calD(V_{0,n}) \to \calC(S_{0,n})$ and
$\calH(V_{0,n}) \to \MCG(S_{0,n})$ are isomorphisms.
\end{lemma}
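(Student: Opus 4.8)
The plan is to exploit the fact that $V_{0,n}$ is nothing but a $3$--ball $B^3$ carrying $n$ spots, so that $\bdy V_{0,n} \homeo S^2$ while $\bdy_+ V_{0,n} \homeo S_{0,n}$. Both maps are already known to be injective homomorphisms (the first is simplicial and injective by the remark following \refdef{DiskComplex}, the second by the remark following \refdef{MCG}), so in each case only surjectivity is at issue.

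For the map $\calD(V_{0,n}) \to \calC(S_{0,n})$ I would prove surjectivity on vertices and on simplices separately. For a vertex: given an essential, non-peripheral curve $\alpha \subset S_{0,n} \subset \bdy B^3 = S^2$, the curve $\alpha$ separates $S^2$ into two disks, and pushing either one slightly into the interior of $B^3$ produces a properly embedded disk $D$ with $\bdy D = \alpha$. Since a disk in $V$ is essential and non-peripheral exactly when its boundary is, $D$ is a vertex of $\calD(V_{0,n})$ mapping to $\alpha$. For a simplex I must realize a collection of disjoint curves $\alpha_0, \dots, \alpha_k$ by \emph{disjoint} disks, and here I would induct on $k$ using an innermost curve: some $\alpha_i$ cuts a disk $E_i$ off of $S^2$ meeting no other $\alpha_j$, so capping $\alpha_i$ with a slight push-in of $E_i$ gives a disk $D_i$ lying in a small half-ball disjoint from the remaining curves; the complement of that half-ball is again a ball, in which the inductive hypothesis provides disjoint disks for the curves $\alpha_0, \dots, \widehat{\alpha_i}, \dots, \alpha_k$, automatically disjoint from $D_i$. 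This upgrades the map to a simplicial isomorphism.

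For the map $\calH(V_{0,n}) \to \MCG(S_{0,n})$ I would argue surjectivity by extending a given homeomorphism $\phi$ of $S_{0,n}$ inward twice. First extend $\phi$ across the spots: each spot is a disk, and a homeomorphism of its bounding circle extends over the disk by coning, so $\phi$ extends to a homeomorphism $\bar\phi$ of the whole sphere $\bdy B^3 = S^2$ carrying $\bdy_0 V$ to itself. The Alexander trick (radial coning of $\bar\phi$) then extends $\bar\phi$ to a homeomorphism $\Phi$ of $B^3$, which lies in $\calH(V_{0,n})$ and restricts to $\phi$ on $\bdy_+ V$. Combined with the injectivity already noted, this makes the map an isomorphism.

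The capping disk and the two coning steps are routine. The one place that genuinely demands care is the simplex-surjectivity for $\calD(V_{0,n})$: an individual disk for each curve is cheap, but one must ensure the disks realizing a whole multicurve can be made \emph{simultaneously} disjoint, which is exactly what the innermost-curve induction delivers. A secondary bookkeeping point to verify is that the conventions on permuting spots for $\calH(V_{0,n})$ match those on permuting boundary components for $\MCG(S_{0,n})$, so that the extension $\Phi$ represents a bona fide class in $\calH(V_{0,n})$.
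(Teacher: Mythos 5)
Your proof is correct and follows essentially the same route as the paper: every essential non-peripheral curve in $\bdy_+ V_{0,n} \subset S^2$ bounds a disk in the ball, giving surjectivity of $\calD(V_{0,n}) \to \calC(S_{0,n})$, and the Alexander trick handles the mapping class groups. The only difference is that you spell out the simplex-level surjectivity (simultaneous disjointness of the capping disks via an innermost-curve induction), a point the paper leaves implicit; this is a worthwhile detail but not a change of approach.
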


\begin{proof}
The three-manifold $V_{0,n}$ is an $n$--spotted ball.  Every simple
closed curve in $\bdy_+ V$ bounds a disk in $V$.  This proves that
$\calD(V_{0,n}) \to \calC(S_{0,n})$ is a surjection and thus, by the
remark immediately after \refdef{DiskComplex}, an isomorphism.

It follows from the Alexander trick that the inclusion of mapping
class groups is an isomorphism.
%%% Reference?
\end{proof}

The genus zero case of \refthm{AutOfDiskComplex} is an immediate
corollary.  We now give basic definitions.

Suppose that $V$ is a handlebody.  Two disks $D, E \in \calD(V)$ are
{\em topologically equivalent} if there is a mapping class $f \in
\calH(V)$ so that $f(D) = E$.  The {\em topological type} of $D$ is
its equivalence class in $\calD(V)$.

For any simplicial complex, $\calK$, if $\sigma \in \calK$ is a
simplex then recall that
$$ 
\link(\sigma) = \{ \tau \in \calK \st \sigma \cap \tau =
\emptyset,~\sigma \cup \tau \in \calK \}.
$$ 
So if $\DD$ is a simplex of $\calD(V)$ then $\link(\DD)$ is the
subcomplex of $\calD(V)$ spanned by disks $E$ disjoint from some $D
\in \DD$ and distinct from all $D \in \DD$.  

%%% If the one-skeleton of a simplex appears then the simplex itself
%%% appears. 

If $X \subset Y$ is a properly embedded submanifold then we write
$\neigh(X)$ and $\Neigh(X)$ to denote open and closed regular
neighborhoods of $X$ in $Y$.  If $X$ is codimension zero then the {\em
frontier} of $X$ in $Y$ is the closure of $\bdy X \setminus \bdy Y$.  

A simplex $\DD \in \calD(V)$ is a {\em cut system} if $V \setminus
\neigh(\DD)$ is a spotted ball.  Note that every disk of $\DD$ yields
two spots of $V \setminus \neigh(\DD)$. 

Recall that for simple curves $\alpha, \beta$ properly embedded in $S$
the {\em geometric intersection number} $i(\alpha, \beta)$ is the
minimum possible intersection number between proper isotopy
representatives.

Two disks $D, E \in \calD(V)$ are {\em dual} if $i(\bdy D, \bdy E) =
2$; equivalently, after a suitable proper isotopy $D$ and $E$
intersect along a single arc; equivalently, after a suitable proper
isotopy a regular neighborhood of $D \cup E$ is a four-spotted ball
with all spots essential in $V$.  See \reffig{Dual}.

\begin{figure}[htbp]
$$\begin{array}{c}
\epsfig{file=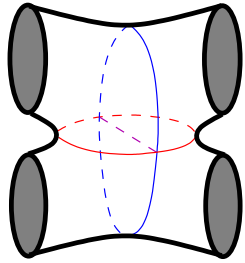, height = 3.5 cm}
\end{array}$$
\caption{Every spot of the $V_{0,4}$ containing a pair of dual disks
is essential in $V$.}
\label{Fig:Dual}
\end{figure}

If $\DD = \{ D_i \}$ is a cut system we define $\dual_i(\DD)$ to be
the subcomplex spanned by the disks $E \in \calD(V)$ which are dual to
$D_i$ and disjoint from $D_j$ for all $j \neq i$.  We take
$\dual(\DD)$ to be the complex spanned by $\cup_i \dual_i(\DD)$.

\section{The proof of \refthm{AutOfDiskComplex}}
\label{Sec:Sketch}

%%% Could state this as: fix the translational part, then the ``first
%%% order derivative'' and then the ``second order'' derivative.

%%% Cut systems instead of a single non-sep.  \dual_i(\DD) is simpler
%%% than \dual(D).  

Let $V = V_{g,n}$ be a genus $g$ handlebody with $n$ spots.  We
suppose that $g \geq 1$ and $e(V) \geq 3$.  Let $\phi$ be any
automorphism of $\calD(V)$.  \reflem{Types} proves that $\phi$
preserves the topological types of disks.  In addition, $\phi$ sends
cut systems to cut systems (\refclm{Cut}).  Next \reflem{Dual} shows
that $\phi$ preserves duality.  Also, for any cut system $\DD = \{ D_i
\}$, the complex $\dual_i(\DD)$ is connected
(\reflem{DualComplexConnected}).

Pick any geometric automorphism $f_{\cut}$ so that $f_{\cut}(\DD) =
\phi(\DD)$, vertex-wise; $f_{\cut}$ exists by \refclm{Cut}.  Define
$\phi_{\cut} = f^{-1}_{\cut} \circ \phi$.  Thus
$$
\phi_{\cut} | \DD = \Id.
$$

Let $V' \isom V_{0, 2g + n}$ be the spotted ball obtained by cutting
$V$ along a regular neighborhood of $\DD$.  Now, since $\phi_{\cut}$
preserves $\link(\DD) \isom \calD(V')$, by \refthm{Korkmaz} and
\reflem{SpottedBalls} there is a homeomorphism $f \from V' \to V'$ so
that the induced automorphism $f \in \Aut(\calD(V'))$ satisfies $f =
\phi_{\cut} | \link(\DD)$.  \refsec{Reglue} proves that $f$ preserves
the $g$ pairs of spots of $V'$ coming from $\DD$.  Thus $f$ can be
glued to give a homeomorphism $f_{\link} \from V \to V$ as well as an
induced geometric automorphism $f_{\link} \in \Aut(\calD(V))$.  Define
$\phi_{\link} = f^{-1}_{\link} \circ \phi_{\cut}$.  Thus
$$
\phi_{\link} | \DD \cup \link(\DD) = \Id.
$$

Recall that $\phi_{\link}$ preserves duals by \reflem{Dual}.  For
every $D_i \in \DD$ pick some dual $E_i \in \dual_i(\DD)$.  By
\reflem{PreCrawl} there is an integer $m_i \in \ZZ$ so that
$T_i^{m_i}(E_i) = \phi_{\link}(E_i)$, where $T_i$ is the Dehn twist
about $D_i$.  Define $f_{\dual} = \prod T_i^{m_i}$ and define
$\phi_{\dual} = f_{\dual}^{-1} \circ \phi_{\link}$.  Letting $\EE = \{
E_i \}$ we have
\[
\phi_{\dual} | \DD \cup \link(\DD) \cup \EE = \Id.
\]

Recall that \reflem{DualComplexConnected} proves that $\dual_i(\DD)$
is connected.  Therefore, a {\em crawling} argument, given in
\reflem{CrawlDual}, proves that
$$
\phi_{\dual} | \DD \cup \link(\DD) \cup \dual(\DD) = \Id.
$$ 
Wajnryb~\cite{Wajnryb98} proves that the {\em cut system complex} is
connected. Thus we may likewise crawl through $\calD(V)$ and prove
(\refsec{CrawlDisk}) that
$$
\phi_{\dual} = \Id
$$
and so prove that 
$$
\phi = f_{\cut} \circ f_{\link} \circ f_{\dual}.
$$
Thus $\phi$ is geometric.

\section{Small handlebodies}
\label{Sec:Small}

%%% Hyperelliptic = bad.

In this section we deal with the small cases, where $e(V) =
2g - 2 + n \leq 2$.  We start with genus zero.  If $n \leq 3$ then
$\calD(V_{0,n})$ is empty.  By \reflem{SpottedBalls} the mapping class
groups of $V$ and $\bdy_+ V$ are equal.  Thus
\[
\calH(V_{0}),~\calH(V_{0, 1}) \isom \ZZ/2\ZZ
\] 
while 
\[
\calH(V_{0, 2}) \isom K_4 \quad \rm{and} \quad \calH(V_{0, 3})
\isom \ZZ/2\ZZ \cross \Sigma_3.
\]
Here $K_4$ is the Klein four-group and $\Sigma_3$ is the symmetric
group on three objects~\cite[Appendix A]{RafiSchleimer08}.

If $n = 4$ then $\calD$ is a countable collection of vertices with no
higher dimensional simplices.  Thus $\Aut(\calD) = \Sigma_\infty$ is
uncountable.  However, there are only countably many geometric
automorphisms.  In fact, by \reflem{SpottedBalls}, the mapping class
group $\calH(V_{0,4})$ is isomorphic to $K_4 \rtimes \PGL(2,
\ZZ)$~\cite[Appendix A]{RafiSchleimer08}.

For genus one, if $n = 0$ or $1$ then $\calD$ is a single point and
$\Aut(\calD)$ is trivial.  On the other hand
$$
\calH(V_{1}),~\calH(V_{1, 1}) \isom \ZZ \rtimes K_4.
$$ 
%%% Proof: This is the stabilizer of $(1,0)$ in $\GL(2, \ZZ)$.  That
%%% is, matrices of the form [ \pm 1     b ]
%%%                          [     0 \pm 1 ].//  Hyper = -Id
For $V = V_{1,2}$ matters are more subtle.  The subcomplex $\NonSep(V)
\subset \calD(V)$, spanned by non-separating disks, is a copy of the
Bass-Serre tree for the meridian curve in $S_{1,1} = \bdy_+
V_{1,1}$~\cite{KentEtAl06}.
%%% Idea - use the spot erasing map... improve reference.  
%%% Note - can apply spot erasing map to both V and S - fibres are
%%% identical. 
Thus $\NonSep(V)$ is a copy of $T_\infty$: the regular tree with
countably infinite valance.  Now, if $E \in \calD(V)$ is separating
then there is a unique disk $D$ disjoint from $E$; also, $D$ is
necessarily non-separating.  It follows that $\calD(V)$ is a copy of
$\NonSep(V)$ with countably many leaves attached to every vertex.
Thus $\Aut(\calD)$ contains a copy of $\Aut(T_\infty)$ as well as
countably many copies of $\Sigma_\infty$ and is therefore uncountable.
As usual $\calH(V)$ is countable and so $\Aut(\calD)$ contains
non-geometric elements.  However, following Luo's treatment of
$\calC(S)$~\cite{Luo00} suggests the following problem:

\begin{problem}
\label{Prob:Dual}
Suppose that $V = V_{1,2}$.  Let $\calG$ be the subgroup of
$\Aut(\calD(V))$ consisting of automorphisms preserving duality: if
$\phi \in \calH$ and $D, E$ are dual then so are $\phi(D), \phi(E)$.
Is every element of $\calG$ geometric?
\end{problem}

Note that this approach of recording duality is precisely correct for
the four-spotted ball; the complex where simplices record duality in
$V_{0,4}$ is the {\em Farey tessellation}, $\calF$, and every element
of $\Aut(\calF)$ is geometric.  See~\cite[Section 3.2]{Luo00}.
%%% Avoiding discussing the hyperelliptic.

The last exceptional case is $V = V_2$.  Let $\NonSep(V)$ be the
subcomplex of $\calD(V)$ spanned by non-separating disks.  Then
$\NonSep(V)$ is an increasing union, as follows: 
%%% Just as $\calF$ is an increasing union... 
$\calN_0$ is a single triangle, $\calN_{i+1}$ is obtained by attaching
(to every free edge of $\calN_i$) a countable collection of triangles,
and $\NonSep(V)$ is the increasing union of the $\calN_i$.  A careful
discussion of $\NonSep(V)$ is given by Cho and
McCullough~\cite[Section 4]{ChoMcCullough06}
%%% Notice that the link of a vertex in $\NonSep(V_2)$ is a copy of
%%% the tree $\NonSep(V_{1,2})$.  This is evident a priori -- perhaps
%%% could be used to give a direct proof?  Ask Korkmaz!
%%%
%%% I was ``inspired'' to realize/remember this by thinking about the
%%% map from \calD to \calS the sphere complex, which in genus two is
%%% a copy of $\calF (the Farey tessellation), with fins. I think that
%%% the fibre of the map from \calD(V_{1,2}) to \calS is not coarsely 
%%% connected... But small counter-examples are not very
%%% convincing... 

We obtain $\calD(V)$ by attaching a countable collection of triangles
to every edge of $\NonSep(V)$.  To see this note that every separating
disk $E$ divides $V$ into two copies of $V_{1,1}$.  These copies of
$V_{1,1}$ have meridian disks, say $D$ and $D'$.  Thus $\link(E)$ is
an edge and the triangle $\{ E, D, D' \}$ has two free edges in
$\calD(V)$, as indicated.  Finally, there is a countable collection of
separating disks lying in $V \setminus (D \cup D')$, again as
indicated.

It follows that $\Aut(\calD(V_2))$ is uncountable.  Again, as in
\refprob{Dual}, we may ask: are all ``duality-respecting'' elements $f
\in \Aut(\calD(V_2))$ geometric?  We end with another open problem:

\begin{problem}
\label{Prob:NonSep}
Suppose that $V$ is a handlebody with $e(V)$ and genus both
sufficiently large. Show that $\Aut(\NonSep(V)) = \calH(V)$.
\end{problem}

%%% Also interesting for S_{g,n}.

A solution to \refprob{NonSep} may lead to a simplified proof of
\refthm{AutOfHandlebodyGroup}.

\section{Topological types}
\label{Sec:Types}

The goal of this section is:

\begin{lemma}
\label{Lem:Types}
Suppose that $\phi \in \Aut(\calD(V))$.  Then $\phi$ preserves
topological types of disks.
\end{lemma}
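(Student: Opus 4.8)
The plan is to recognise the topological type of a disk $D$ from combinatorial data attached to it in $\calD(V)$, the basic datum being its link. Cutting $V$ along $\neigh(D)$ produces a spotted handlebody $V_D$, and there is a natural simplicial isomorphism $\link(D) \isom \calD(V_D)$, where the two spots created by the cut record where $D$ was. Since the topological type of $D$ is exactly the homeomorphism type of $V_D$ (remembering which spots came from the cut), and there are only finitely many such types for fixed $(g,n)$, it suffices to recover this homeomorphism type from the isomorphism type of $\link(D)$ together with how $\link(D)$ sits inside $\calD(V)$. As $\phi$ carries $\link(D)$ isomorphically onto $\link(\phi(D))$, any invariant extracted this way is automatically preserved, forcing $D$ and $\phi(D)$ to share a type.

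The first step is to separate non-separating disks from separating ones using join decompositions. If $D$ separates $V$ into $V_1$ and $V_2$, each carrying an essential disk, then every disk on one side is disjoint from every disk on the other, so $\link(D) \isom \calD(V_1) \ast \calD(V_2)$ is a nontrivial join; conversely, for $e(V) \geq 3$ a non-separating $D$ gives $V_D \isom V_{g-1,n+2}$, whose disk complex is connected and not a nontrivial join. Since join-decomposability is an isomorphism invariant, $\phi$ preserves the class of separating disks whose two sides both carry disks, and for these the unordered pair of homeomorphism types $\{V_1, V_2\}$ is read off from the two join factors. Here I would induct on $3g+n$, which equals $\dim \calD(V) + 4$ and strictly decreases for each factor $\calD(V_i)$, applying the lemma to these smaller complexes; the genus-zero base cases are supplied by \reflem{SpottedBalls} and \refthm{Korkmaz}.

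The main obstacle is the single degenerate family of separating disks whose complement contains a piece with no essential disk, namely the disks $D$ for which $\bdy D$ cuts a three-spotted ball $V_{0,3}$ off of $V$ (equivalently, $\bdy D$ surrounds exactly two spots). For such $D$ the empty join factor disappears and $\link(D) \isom \calD(V_{g,n-1})$, which is again connected and not a nontrivial join; worse, one computes that it has the same dimension $3g+n-5$ as the link $\calD(V_{g-1,n+2})$ of a non-separating disk, so no crude count separates the two. To break the tie I would run the whole argument as an induction on $3g+n$ whose inductive statement also asserts that the pair $(\genus W, \text{number of spots of } W)$ is determined by the abstract isomorphism type of $\calD(W)$; applying this inside $\link(D) \isom \calD(V_D)$ then recovers $\genus(V_D)$, which is $g-1$ for a non-separating disk and $g$ for a degenerate separating disk, and so distinguishes them. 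Verifying that the genus of a spotted handlebody is recoverable from its disk complex, uniformly over the range $e(V) \geq 3$, is the delicate point where the induction does the real work; the remaining bookkeeping of the finite list of types and of the cut spots is routine.
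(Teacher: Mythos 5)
Your setup is sound and matches the paper's opening moves: separating disks that are not pants disks are detected because their links are (uniquely realized) joins, while non-separating disks and pants disks both have non-join links of the same dimension, so the whole difficulty is to tell $\calD(V_{g-1,n+2})$ apart from $\calD(V_{g,n-1})$. But that is exactly the point you defer --- ``the delicate point where the induction does the real work'' --- and it is the heart of the lemma, so as written there is a genuine gap. Moreover the induction you sketch is not obviously well-founded: to apply the inductive hypothesis to $\link(D)\isom\calD(V_D)$ you must already know which of the two candidate cut handlebodies $V_D$ is, which is the very thing being decided; one can try to salvage this by comparing link types across all vertices (if both types occur, the one of larger genus is the pants-disk link), but when only one type occurs you must rule out $\calD(V_{g-1,n+2})\isom\calD(V_{g,n-1})$, and when $e(V)=3$ the handlebody $V_{g,n-1}$ has $e=2$ and falls into the exceptional range (e.g.\ $\calD(V_{1,2})$ is a decorated tree), so the inductive statement does not even apply to it and those cases need separate, ad hoc comparison. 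None of this is fatal, but none of it is done.

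The paper sidesteps the issue with a different idea that your proposal is missing: handle disks. A handle disk $E$ (one cutting off a solid handle $V_{1,1}$) has $\link(E)$ a \emph{cone}, and its apex is the meridian of that handle, which is non-separating; conversely every non-separating disk is such an apex. This gives \refclm{NonSep}: $D$ is non-separating if and only if some $\link(E)$ is a cone with apex $D$ --- a characterization of $D$ that does not look at $\link(D)$ at all, and hence never has to distinguish the two isomorphic-looking link types. From there cut systems are combinatorially characterized (\refclm{Cut}), the genus is read off as the size of a cut system, and \reflem{Characterize} (your ``genus is recoverable'' statement) follows directly, with no induction. If you want to complete your own route, you should either prove your inductive genus-recovery statement in detail, including the $e=2$ exceptional links, or adopt the cone-apex characterization, which does the job in one step.
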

%%% This is more than is needed, actually.
%%% Also, this is to be contrasted with \calC(S_{1,2}).

The {\em complexity} of $V_{g,n}$ is $\xi(V) = 3g - 3 + n$.  If
$\xi(V) \geq 1$ then $\xi(V)$ is the number of vertices of a maximal
simplex of $\calD(V)$.  Note that $V_1$, $V_{1,1}$ and $V_{0,4}$ are
the only handlebodies where $\calD(V)$ has dimension zero.  (When
$\calD(V)$ is empty its dimension is $-1$.)  Further $V_1$ and
$V_{1,1}$ are the only handlebodies where $\calD(V)$ is a single
point.
%%% We will make much of this fact.

We will call $V_{0,3}$, the three-spotted ball, a {\em solid pair of
pants}.  Thus $\xi(V)$ is the number of disks in a {\em pants
decomposition} of $V$ while $e(V) = 2g - 2 + n$ is the
number of solid pants in the decomposition.  We will call $V_{1,1}$ a
{\em solid handle}.  Suppose now that $E$ is separating with $V
\setminus \neigh(D) = X \cup Y$.  If $X$ or $Y$ is a solid pants then
we call $E$ a {\em pants disk}.  If $X$ or $Y$ is a solid handle then
we call $E$ a {\em handle disk}.

Recall that if $\calK$ and $\calL$ are non-empty simplicial complexes
with disjoint vertex sets then $\calK \join \calL$, their {\em join},
is the complex
$$ 
\calK \cup \{ \sigma \cup \tau \st \sigma \in \calK,~\tau \in \calL \}
\cup \calL.
$$ 
%%% We'll avoid taking joins with the empty set.

\begin{claim}
\label{Clm:NotJoin}
For any handlebody $V$ the complex $\calD(V)$ is not a join.
\end{claim}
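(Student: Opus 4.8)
The plan is to argue by contradiction and reduce the whole statement to a single geometric fact: that any two disjoint essential disks admit a common transversal disk. Suppose $\calD(V) = \calK \join \calL$ with $\calK, \calL$ non-empty, and choose vertices $D \in \calK$ and $E \in \calL$. By the definition of the join, $\{D, E\}$ is a simplex, so $D$ and $E$ are disjoint. Now if $F$ is any disk meeting $D$, meaning $i(\bdy F, \bdy D) > 0$, then $F \notin \calL$ (every vertex of $\calL$ spans an edge with $D \in \calK$ and is therefore disjoint from $D$), so $F \in \calK$; symmetrically, every disk meeting $E$ lies in $\calL$. Hence a single disk $F$ meeting \emph{both} $D$ and $E$ would lie in $\calK \cap \calL = \emptyset$, which is absurd. (When $\calD(V)$ has no edge at all, as for $V_{0,n}$ with $n \leq 4$, $V_1$ and $V_{1,1}$, there is nothing to prove, since a join of non-empty complexes always contains an edge.) Thus it suffices to prove the following: \emph{for every pair of disjoint essential non-peripheral disks $D, E$ in $V$ there is an essential non-peripheral disk $F$ with $i(\bdy F, \bdy D) > 0$ and $i(\bdy F, \bdy E) > 0$.}

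I would first dispose of the spotted ball $V = V_{0,n}$. Here \reflem{SpottedBalls} identifies $\calD(V)$ with $\calC(S_{0,n})$ and every essential non-peripheral curve bounds a disk, so the statement reduces to the routine surface fact that two disjoint essential non-peripheral curves on $S_{0,n}$ have a common transversal curve (cut $S$ along $\bdy D \cup \bdy E$ and join up appropriate arcs). In positive genus the construction is the same in spirit, but the transversal must be chosen to bound a disk. I would first produce auxiliary disks $P$ and $Q$ with $P$ meeting $D$ and $Q$ meeting $E$: a non-separating disk has a dual disk (meeting it in a single arc), while a separating disk $D$, splitting $V = X \cup_D Y$ with a positive-genus side, is met by the band-sum across $D$ of cocore disks drawn from $X$ and $Y$. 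If $P$ already meets $E$, or $Q$ meets $D$, we are done; otherwise I would band-sum $P$ and $Q$ along an embedded band $b \subset \bdy_+ V$ chosen disjoint from $\bdy D \cup \bdy E$, obtaining a disk $F$ whose boundary carries the crossings of $\bdy P$ with $\bdy D$ and of $\bdy Q$ with $\bdy E$.

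The hard part is making that last sentence rigorous, and it is the genuine content of the claim. A band-sum can only lower geometric intersection numbers, so one must check that no bigon is created between $\bdy F$ and $\bdy D$ (respectively $\bdy E$) after the surgery. Crucially, there is \emph{no homological shortcut}: the boundaries of meridian disks are isotropic for the intersection form on $\bdy_+ V$ (Lagrangian in the closed case, by ``half lives, half dies''), so the algebraic intersection $\hat\imath(\bdy F, \bdy D)$ vanishes identically and can never certify a geometric crossing. The surviving crossings must therefore be detected by a direct innermost-bigon analysis, controlling the placement of $b$ relative to $\bdy D \cup \bdy E$. Carrying this out, together with confirming the existence of the cocore disks $P, Q$ in the few degenerate positive-genus configurations (for instance when one side of a separating disk is a solid pants), is where the real work lies; the reduction of the first paragraph then closes the argument.
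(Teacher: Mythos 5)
Your reduction is sound: if $\calD(V) = \calK \join \calL$ with both factors non-empty, then every vertex lies in $\calK$ or $\calL$ and is therefore disjoint from $E$ or from $D$ respectively, so a single disk meeting both of the disjoint disks $D$ and $E$ gives the contradiction; the edgeless cases are also dispatched correctly. But this reduces the claim to a statement you then do not prove, and you say as much. The assertion that \emph{every} pair of disjoint essential non-peripheral disks admits a common transversal disk is exactly where the content lies, and the band-sum sketch leaves open both the innermost-bigon analysis (needed because, as you observe, algebraic intersection numbers of disk boundaries vanish and so cannot certify that crossings survive the surgery) and the existence of the auxiliary disks $P, Q$ in the degenerate configurations such as pants disks. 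As written, the proposal is a correct reduction plus an unproven geometric lemma, so it does not yet constitute a proof.

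The paper takes a cheaper route that avoids your universal statement entirely. For $e(V) \geq 3$ it invokes the existence of a single \emph{filling} pair $D, E$ (every disk meets $D$ or $E$); such a pair is then at distance at least three in the one-skeleton of $\calD(V)$, whereas the one-skeleton of a join of non-empty complexes has diameter at most two. The cases $e(V) \leq 2$ are checked by hand against \refsec{Small}. Note the quantifiers: the paper needs one well-chosen pair with a universal separation property, while your reduction demands a transversal for \emph{every} disjoint pair, a strictly harder input. To salvage your approach you would need to carry out the bigon analysis for the band sum in full and treat the small and degenerate cases; alternatively, restricting attention to a single filling pair and using the diameter bound essentially recovers the paper's argument.
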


\begin{proof}
When $e(V) \leq 2$ this can be checked case-by-case, following
\refsec{Small}.  The remaining handlebodies all admit disks $D, E$
that {\em fill}: every disk $F$ meets at least one of $D$ or $E$.  It
follows that any edge-path in $\calD^{(1)}(V)$ connecting $D$ to $E$
has length at least three.  However, the diameter of the one-skeleton
of a join is either one or two.
\end{proof}

The complex $\calD(V)$ is {\em flag}: minimal non-faces have dimension
one.

Observe that $\phi$ preserves the combinatorics of $\calD(V)$.  Thus
any topological property of $V$ that has a combinatorial
characterization will be preserved by $\phi$.  We proceed with a
sequence of claims.

\begin{claim}
\label{Clm:Join}
The disk $E$ is a separating disk yet not a pants disk if and only if
$\link(E)$ is a join.  Furthermore, in this case $\link(E)$ is
realized as a join in exactly one way, up to permuting the factors.
\end{claim}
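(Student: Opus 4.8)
The plan is to analyze $\link(E)$ by cutting $V$ along $E$ and comparing with the disk complexes of the pieces. Any disk $F$ representing a vertex of $\link(E)$ is disjoint from $E$, hence isotopic into $V \setminus \neigh(E)$, and since $E$ is essential and non-peripheral one checks that $F$ stays essential and non-peripheral in whichever component contains it: a disk parallel to the new spot left by $E$ would be isotopic to $E$ itself, and a disk parallel to an old spot would already be peripheral in $V$. Conversely, every essential non-peripheral disk in a component of the complement determines a vertex of $\link(E)$. First I would treat the separating case, writing $V \setminus \neigh(E) = X \disjoint Y$. Since any disk lying in $X$ is automatically disjoint from any disk lying in $Y$, and $\calD(V)$ is flag, the link is identified as $\link(E) \isom \calD(X) \join \calD(Y)$ whenever both factors are non-empty.

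Next I would determine exactly when a factor $\calD(Z)$, for $Z \in \{X, Y\}$, is empty. Because $E$ is essential and non-peripheral, the side $Z$ can be neither a one-spotted nor a two-spotted ball (these would force $\bdy E$ to be inessential or peripheral). Consequently $\calD(Z)$ is empty precisely when $Z$ is a solid pair of pants $V_{0,3}$, that is, exactly when $E$ is a pants disk on that side. Thus if $E$ is separating and not a pants disk, both $\calD(X)$ and $\calD(Y)$ are non-empty and $\link(E)$ is a join, giving one direction of the biconditional.

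For the converse I would dispose of the two remaining cases. If $E$ is non-separating then $V \setminus \neigh(E)$ is a single handlebody $W$ and $\link(E) \isom \calD(W)$; if $E$ is a pants disk then one side has empty disk complex, so $\link(E) \isom \calD(Y)$ for the other side $Y$. In either case $\link(E)$ is the disk complex of a single handlebody, which is not a join by \refclm{NotJoin}. This completes the equivalence. For the uniqueness statement I would use that $\link(E)$ is flag, so writing it as a join corresponds to partitioning its vertices into two non-empty classes with every cross pair adjacent, equivalently to splitting off a union of connected components of the complement graph $\Gamma^{c}$ of the one-skeleton. By \refclm{NotJoin} neither $\calD(X)$ nor $\calD(Y)$ is itself a join, so the complement graph of each is connected. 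Hence $\Gamma^{c}(\link(E))$ has exactly two components, the vertex sets of $\calD(X)$ and of $\calD(Y)$, and any expression of $\link(E)$ as a join must group these components, recovering $\{\calD(X), \calD(Y)\}$ up to order.

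The main obstacle I anticipate is the bookkeeping behind the identification $\link(E) \isom \calD(X) \join \calD(Y)$: one must set up a clean bijection between essential non-peripheral disks of $V$ disjoint from $E$ and essential non-peripheral disks of the two sides, controlling in particular the peripheral curves introduced by the spot that $E$ leaves behind, and then verify the elementary classification of when a side is a solid pair of pants. Once this correspondence is established, both the biconditional and the uniqueness reduce formally to \refclm{NotJoin} together with the flagness of $\calD(V)$.
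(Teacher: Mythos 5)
Your proposal is correct and follows essentially the same route as the paper: identify $\link(E)$ with $\calD(X)\join\calD(Y)$ after cutting along $E$, observe that a factor is empty exactly when that side is a solid pants, handle the non-separating and pants-disk cases by noting the link is the disk complex of a single handlebody (hence not a join by \refclm{NotJoin}), and derive uniqueness of the join decomposition from flagness plus the fact that neither factor is itself a join. Your complement-graph formulation of the uniqueness step is just a more explicit version of the paper's one-line appeal to flagness and associativity of the join.
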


\begin{proof}
Suppose that $V \setminus \neigh(E) = X \cup Y$, where neither $X$ nor
$Y$ is a solid pants.  Since $E$ is essential and non-peripheral both
$\calD(X)$ and $\calD(Y)$ are non-empty.  It follows that $\link(E) =
\calD(X) \join \calD(Y)$, and neither factor is empty.  Furthermore,
this join is realized uniquely, because $\calD(X)$ is never itself a
join (by \refclm{NotJoin}), $\calD(X)$ is flag and join is
associative.

%%% Suppose that Z is a flag complex.  Consider the following relation
%%% on the vertices of Z: xRx' iff {x,x'} is not an edge of Z.  Let S
%%% be the transitive closure of R. I claim that the equivalence
%%% classes of S gives a decomposition of Z into a join: Z = \join_i
%%% X_i where none of the X_i's are themselves joins.   

%%% To see this: let X_0 be one part of the partition induced by S.
%%% Then for all x \in X_0, y \in Z - X_0 there is an edge connecting
%%% them.  Since Z is flag, we realize Z as a join X_0 \join Z - X_0.
%%% We may apply the same reasoning to Z - X_0 and use the fact that
%%% join is associative. 

On the other hand, if $E$ is non-separating then $\link(E)$ is
isomorphic to $\calD(V_{g - 1, n + 2})$.  If $E$ is a pants disk then
$\link(E) \isom \calD(V_{g, n - 1})$.  Neither of these is a join by
\refclm{NotJoin}. 
\end{proof}

A {\em cone} is the join of a point with some non-empty simplicial
complex.

\begin{claim}
\label{Clm:Handle}
Suppose that $V \neq V_{1,2}$.  Then $E \in \calD(V)$ is a handle disk
if and only if $\link(E)$ is a cone.
\end{claim}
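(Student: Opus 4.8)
The plan is to prove both implications by leaning on the join decomposition of links established in \refclm{Join}, together with the fact (\refclm{NotJoin}) that $\calD(W)$ is never a join for any handlebody $W$. The common thread is that a cone is a very special join, namely $\{v\} \join \calM$, and that a single point is the only complex that can be coned off without being a join in its own right.

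First I would dispose of the forward implication. Suppose $E$ is a handle disk, so that $E$ is separating with $V \setminus \neigh(E) = X \cup Y$ and, after relabeling, $X \isom V_{1,1}$ a solid handle; then $Y \isom V_{g-1,n+1}$. Since $V \neq V_{1,2}$ (and since $V_1, V_{1,1}$ carry no separating disks, making the statement vacuous there), $Y$ is not a solid pants and in fact $\calD(Y) \neq \emptyset$. In particular $E$ is not a pants disk, so \refclm{Join} applies and yields $\link(E) = \calD(X) \join \calD(Y)$. But $\calD(X) = \calD(V_{1,1})$ is a single point, so $\link(E) = \{v\} \join \calD(Y)$ with $\calD(Y)$ non-empty, which is exactly a cone.

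For the reverse implication, suppose $\link(E)$ is a cone. A cone is a join, so by \refclm{Join} the disk $E$ must be separating and not a pants disk (for non-separating disks and pants disks the link is isomorphic to $\calD$ of a handlebody, which is not a join by \refclm{NotJoin}). Hence $\link(E) = \calD(X) \join \calD(Y)$ with both factors non-empty. Writing $\link(E) = \{v\} \join \calM$, the cone point $v$ lies in one factor, say $\calD(X)$, and is adjacent to every other vertex, in particular to all remaining vertices of $\calD(X)$. Were $\calD(X)$ to have a second vertex, this would exhibit $\calD(X)$ itself as a join, contradicting \refclm{NotJoin}; therefore $\calD(X) = \{v\}$ is a single point. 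Since $\calD(W)$ is a single point only for $W \isom V_1$ or $W \isom V_{1,1}$, and since $X$ inherits a spot from $E$ (so $X \neq V_1$), we conclude $X \isom V_{1,1}$ and $E$ is a handle disk.

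The step deserving the most care, and the reason for excluding $V_{1,2}$, is guaranteeing that the other side $\calD(Y)$ is non-empty in the forward direction. When $X \isom V_{1,1}$ one computes $Y \isom V_{g-1,n+1}$, and $\calD(Y) = \emptyset$ precisely when $Y$ is a spotted ball with at most three spots; tracing this back forces $V \in \{V_1, V_{1,1}, V_{1,2}\}$. The first two have no handle disks, while in $V = V_{1,2}$ a handle disk is simultaneously a pants disk (there $Y \isom V_{0,3}$), so its link collapses to a single point, which under our conventions is not a cone. This degeneracy is exactly what the hypothesis $V \neq V_{1,2}$ removes, and I expect it to be the only genuine obstacle to the clean equivalence.
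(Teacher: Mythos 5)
Your proof is correct and follows essentially the same route as the paper: both directions rest on \refclm{Join} together with \refclm{NotJoin}, with the only cosmetic difference being that in the reverse direction you argue directly (via flagness) that the factor containing the cone point is a singleton, where the paper invokes the uniqueness clause of \refclm{Join}. Your closing discussion of why $V_{1,2}$ must be excluded is a correct and slightly more explicit account of the degenerate case than the paper gives.
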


\begin{proof}
Suppose that $E$ cuts off a solid handle $X$ with meridian $D$.  Let
$Y$ be the other component of $V \setminus \neigh(E)$.  Since $V \neq
V_{1,2}$ we have that $\calD(Y)$ is non-empty; in particular $E$ is
not a pants disk.  By \refclm{Join} we have $\link(E) = \calD(X) \join
\calD(Y)$.  As $\calD(X) = \{ D \}$ we are done with the forward
direction.

Now suppose that $\link(E)$ is a cone from $D$.  Since a cone is the
join of the apex with the base, by \refclm{Join} the disk $E$ is
separating.  Let $V \setminus \neigh(E) = X \cup Y$.  Thus $\link(E) =
\calD(X) \join \calD(Y)$.  However, by \refclm{Join} the decomposition
of $\link(E)$ is unique; breaking symmetry we may assume that
$\calD(X) = \{ D \}$.  Thus $X$ is a solid handle and we are done.
\end{proof}

It immediately follows that: 

\begin{claim}
\label{Clm:NonSep}
Suppose that $V \neq V_{1,2}$.  Then $D \in \calD(V)$ is
non-separating if and only if there is an $E \in \calD(V)$ so that
$\link(E)$ is a cone with apex $D$. \qed
\end{claim}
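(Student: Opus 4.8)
The plan is to derive Claim~\ref{Clm:NonSep} as an immediate consequence of the preceding \refclm{Handle}, using only the combinatorial characterization of handle disks together with an elementary topological observation about when a disk can arise as the meridian of a solid handle. Since \refclm{Handle} already establishes (for $V \neq V_{1,2}$) that $E$ is a handle disk if and only if $\link(E)$ is a cone, and a cone determines its apex uniquely as the unique vertex joined to every other vertex of the link, the content of the present claim is exactly to identify which disks $D$ can occur as such an apex. I would argue that these apices are precisely the non-separating disks.

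First I would handle the forward direction. Suppose $D$ is non-separating. The goal is to produce a handle disk $E$ whose link is a cone with apex $D$; by \refclm{Handle} it suffices to produce a solid handle $X \subset V$ with meridian $D$. To do this, take a closed regular neighborhood of $D$ together with a suitable band, or equivalently observe that cutting $V$ along $D$ yields $V_{g-1, n+2}$ and that $D$ bounds a solid handle exactly when we can find a separating disk $E$ cutting off a $V_{1,1}$ summand containing $D$ as its meridian. Because $D$ is non-separating, its neighborhood in $V$ can be completed to a solid handle by taking $X = \Neigh(D \cup \gamma)$ for an appropriate arc $\gamma$ on $\bdy_+ V$; the frontier disk $E = \fr(X)$ is then separating with $\calD(X) = \{D\}$, so $E$ is a handle disk with apex $D$. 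The one point requiring care is that $E$ must be essential and non-peripheral in $V$, which holds precisely because $e(V) \geq 3$ guarantees the complementary piece $Y$ is not a solid pants (indeed $\calD(Y) \neq \emptyset$), so that $E$ is genuinely a handle disk in the sense of \refclm{Handle}.

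For the converse, suppose $E \in \calD(V)$ has $\link(E)$ a cone with apex $D$. By \refclm{Handle}, $E$ is a handle disk, so it cuts off a solid handle $X$ with $\calD(X) = \{D\}$; that is, $D$ is the unique meridian of the solid handle $X = V_{1,1}$. Since a solid handle has a single isotopy class of essential non-peripheral disk, and that disk is non-separating in $V_{1,1}$ and hence non-separating in $V$, we conclude $D$ is non-separating. The identification of $D$ with the apex is automatic: in the unique join decomposition $\link(E) = \calD(X) \join \calD(Y)$ furnished by \refclm{Join}, the factor $\calD(X) = \{D\}$ is the single apex point of the cone.

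I do not expect any serious obstacle here, which is why the authors state it as an immediate corollary. The only subtlety worth flagging is the exclusion $V \neq V_{1,2}$ inherited from \refclm{Handle}: in $V_{1,2}$ the complementary piece $Y$ of a handle disk can itself be a solid pants with empty disk complex, breaking the cone characterization. The mildly delicate step, as noted above, is verifying in the forward direction that an arbitrary non-separating $D$ really does bound a solid handle inside $V$ with an \emph{essential} frontier disk $E$; this is where the standing hypothesis $e(V) \geq 3$ (or more precisely, that $V$ is large enough that cutting off a solid handle leaves a piece with a nonempty disk complex) is used.
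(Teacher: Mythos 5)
Your proposal is correct and follows the same route the paper intends: the paper states \refclm{NonSep} as an immediate consequence of \refclm{Handle} (together with the uniqueness of the join decomposition from \refclm{Join}), which is exactly what you spell out in both directions. Your added caveat about the forward direction is well taken --- for the frontier disk $E = \fr\bigl(\Neigh(D \cup \gamma)\bigr)$ to be essential and for $\link(E)$ to be a genuine (non-empty) cone one must exclude $V_1$ and $V_{1,1}$ as well, which is harmless since the claim is only invoked when $e(V) \geq 3$.
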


\begin{claim}
\label{Clm:Cut}
Suppose that $e(V) \geq 3$.  A simplex $\DD \in \calD(V)$ is a
cut system if and only if the following properties hold:
\begin{itemize}
\item
for every pair of disks $D, E \in \link(\DD)$ the complex $\link(E)
\cap \link(\DD)$ is not a cone with apex $D$ and
\item
for every proper subset $\sigma \subsetneq \DD$ there is a pair of
disks $D, E \in \link(\sigma)$ so that the complex $\link(E) \cap
\link(\sigma)$ is a cone with apex $D$.
\end{itemize}
\end{claim}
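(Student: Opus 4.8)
The plan is to translate both displayed conditions into statements about the genus of the handlebodies obtained by cutting $V$ along sub-simplices of $\DD$, and then to recognise a cut system as an inclusion-minimal collection of disks whose complement has genus zero. For any simplex $\sigma \in \calD(V)$ write $V_\sigma = V \setminus \neigh(\sigma)$, so that $\link(\sigma) \isom \calD(V_\sigma)$ and, for $E \in \link(\sigma)$, the complex $\link(E) \cap \link(\sigma)$ is exactly the link of $E$ computed inside $\calD(V_\sigma)$. By \refclm{Handle} and \refclm{NonSep} applied in $V_\sigma$, there exist $D, E \in \link(\sigma)$ with $\link(E) \cap \link(\sigma)$ a cone with apex $D$ if and only if $V_\sigma$ carries a non-separating disk, that is, if and only if some component of $V_\sigma$ has positive genus. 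Thus the first bullet says precisely that $V_\DD$ has no non-separating disk, so that every component of $V_\DD$ is a spotted ball; and the second bullet says that for every proper $\sigma \subsetneq \DD$ the handlebody $V_\sigma$ does carry a non-separating disk.

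Granting this dictionary, the forward implication is short. If $\DD$ is a cut system then $V_\DD$ is a connected spotted ball, which has no non-separating disk, so the first condition holds. For a proper subset $\sigma \subsetneq \DD$, gluing the missing disks back into the connected complement $V_\DD$ never disconnects it, so $V_\sigma$ is connected of positive genus; since $e(V_\sigma) = e(V) \geq 3$ it is large enough to contain a handle disk $E$ whose solid-handle meridian is a non-separating disk $D$, and then $\link(E) \cap \link(\sigma)$ is a cone with apex $D$. Hence the second condition holds.

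For the reverse implication I would argue by minimality. The first condition gives, via the dictionary, that $V_\DD$ is a disjoint union of spotted balls. Form the dual graph whose vertices are the components of $V_\DD$ and whose edges are the disks of $\DD$, a disk being a loop when it is reglued to a single component. Since each cut raises Euler characteristic by one, $\chi(V_\DD) = \chi(V) + |\DD|$, whence $|\DD| = g + c - 1$ with $c$ the number of components; so $V_\DD$ is connected, and $\DD$ a genuine cut system, exactly when $c = 1$. Suppose instead $c \geq 2$. The dual graph, being connected with at least two vertices, contains a non-loop edge $D_i$; setting $\sigma = \DD \setminus \{D_i\}$ merges two balls of $V_\DD$ into a single ball, so $V_\sigma$ is again a disjoint union of spotted balls and carries no non-separating disk. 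By the dictionary this contradicts the second condition. Therefore $c = 1$ and $\DD$ is a cut system.

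The step needing the most care is the dictionary, and in particular the exceptional handlebody $V_{1,2}$, for which \refclm{Handle} and \refclm{NonSep} were not established. I would dispose of it by the observation that $e$ is additive over the pieces of a cut and is preserved by cutting along any disk, so any connected cut-open piece $V_\sigma$ has $e(V_\sigma) = e(V) \geq 3 > 2 = e(V_{1,2})$; hence no connected $V_\sigma$ is $V_{1,2}$. I would also check that disconnection of $V_\sigma$ introduces no spurious cones: a genus-zero piece never has a single-vertex disk complex, so no join factor contributes a stray apex, and this is exactly what keeps the ``no non-separating disk if and only if no cone'' half of the dictionary valid for disconnected $V_\sigma$ as well.
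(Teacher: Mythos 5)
Your overall strategy---translating both bullets via Claims~\ref{Clm:Handle} and~\ref{Clm:NonSep} into the presence or absence of non-separating disks in the cut-open handlebody, then finishing with an Euler characteristic count---is the same as the paper's. Your endgame (delete one non-loop edge of the dual graph to merge two balls and contradict the second bullet) is a harmless variant of the paper's (delete a spanning tree's worth of disks to exhibit a proper sub-cut-system). The problem lies in the ``dictionary,'' and specifically in the direction you did not check.

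You verify that genus-zero components contribute no spurious cones, i.e.\ that ``no non-separating disk $\Rightarrow$ no cone'' survives disconnection. But the step ``first bullet $\Rightarrow$ every component of $V_\DD$ is a spotted ball'' needs the opposite implication---``a non-separating disk somewhere $\Rightarrow$ some $\link(E) \cap \link(\DD)$ is a cone''---and that implication can fail for disconnected $V_\DD$, because a positive-genus component may be one of the small handlebodies $V_{1,1}$ or $V_{1,2}$ excluded from Claims~\ref{Clm:Handle} and~\ref{Clm:NonSep}. Your observation that $e(V_\sigma) = e(V) \geq 3$ rules these out only when $V_\sigma$ is connected; when it is disconnected, $e$ is spread across the components. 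Concretely: let $V = V_{1,3}$ and let $\DD = \{D_1\}$ with $D_1$ a handle disk, so $V_\DD = V_{1,1} \sqcup V_{0,4}$. Then $\link(\DD) \isom \{M\} \join \calD(V_{0,4})$ is an infinite star centered at the meridian $M$; every vertex link in it is either the single point $\{M\}$ or the discrete set $\calD(V_{0,4})$, and neither is a cone under the paper's definition (a single vertex is not the join of a point with a non-empty complex---this degeneracy is exactly why $V_{1,2}$ is excluded from \refclm{Handle}). So the first bullet holds; the second bullet also holds (take $\sigma = \emptyset$, $E = D_1$, $D = M$); yet $\DD$ is not a cut system, and $M$ is a non-separating disk of $V_\DD$ that no cone detects. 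Thus the first sentence of your reverse implication does not follow as written. For what it is worth, the paper's own proof makes the identical leap (``From the first property and by \refclm{NonSep} deduce that $V'$ is a collection of spotted balls''), so the gap is inherited rather than introduced; but it is a genuine gap, and closing it seems to require either strengthening the hypotheses of the claim or a separate argument handling small positive-genus components of $V_\DD$.
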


\begin{proof}
The forward direction follows from \refclm{NonSep} and the definition
of a cut system.  (When $V$ is a spotted ball the only cut system is
the empty set; the empty set has no proper subsets.)

Now for the backwards direction: From the first property and by
\refclm{NonSep} deduce that $V' = V \setminus \neigh(\DD)$ is a
collection of spotted balls.  If $V'$ has at least two components then
there is a proper subset $\sigma \subset \DD$ which is a cut system
for $V$.  Thus $V \setminus \neigh(\sigma)$ is a spotted ball and this
contradicts the second property.
\end{proof}

\begin{lemma}
\label{Lem:Characterize}
Suppose that $V, W$ are handlebodies with $\calD(V) \isom \calD(W)$.
Then either:
\begin{itemize}
\item
$V \homeo W$ or
\item
$V, W \in \{ V_1, V_{1,1} \}$ or
\item
$V, W \in \{ V_0, V_{0,1}, V_{0,2}, V_{0,3} \}$.
\end{itemize}
\end{lemma}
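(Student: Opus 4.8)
The plan is to read off from the abstract isomorphism type of $\calD(V)$ enough combinatorial data to recover the pair $(g,n)$; since a handlebody is determined up to homeomorphism by its genus $g$ and number of spots $n$ (homeomorphisms may permute the spots), this suffices. Following \refsec{Small}, the complex $\calD(V)$ is empty exactly when $V \in \{V_0, V_{0,1}, V_{0,2}, V_{0,3}\}$ and is a single vertex exactly when $V \in \{V_1, V_{1,1}\}$. As ``empty'' and ``single vertex'' are isomorphism invariants, if $\calD(V)$ is empty (a point) then so is $\calD(W)$, landing the pair in the third (second) bullet. The only remaining handlebodies with $e(V) \le 2$ are $V_{0,4}$, $V_{1,2}$ and $V_2$; since the relation is symmetric, I may assume that if either of $V, W$ is among the nine handlebodies with $e \le 2$ then it is $V$, and hence, the empty and point cases now being dispatched, that $V \in \{V_{0,4}, V_{1,2}, V_2\}$.

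Assume henceforth that $\calD(V)$ is neither empty nor a point. Its dimension equals $\xi(V) - 1 = 3g-3+n-1$, so $\calD(V) \isom \calD(W)$ forces $\xi(V) = \xi(W)$, and it remains only to recover the genus. Suppose first $e(V) \ge 3$; by the convention above $e(W) \ge 3$ as well, so in particular $V, W \neq V_{1,2}$ and \refclm{Cut} furnishes a purely combinatorial description of cut systems, one preserved by any isomorphism. The complement of a cut system $\DD$ is a single genus-zero handlebody, so in the graph dual to this splitting every edge is a loop, and a first-Betti-number count gives $|\DD| = g$. Thus the common cardinality of the combinatorially defined cut systems equals $g$ and is an isomorphism invariant; together with $\xi$ it determines $n$, whence $V \homeo W$. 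This settles the case where both handlebodies are large.

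Otherwise $V \in \{V_{0,4}, V_{1,2}, V_2\}$, and the competitors $W$ are constrained to share the dimension of $\calD(V)$. For $\xi = 1$ the only handlebodies are $V_{0,4}$ and $V_{1,1}$, separated by the cardinality of the zero-dimensional complex. For $\xi = 2$ the candidates are $V_{1,2}$ and $V_{0,5}$: here $\calD(V_{1,2})$ has a valence-one vertex, namely a separating disk whose link is the single disk disjoint from it (see \refsec{Small}), whereas every disk of $V_{0,5}$ is a pants disk, with infinite link isomorphic to $\calD(V_{0,4})$ by \refclm{Join}. For $\xi = 3$ the candidates are $V_2$, $V_{1,3}$ and $V_{0,6}$: a separating disk of $V_2$ cuts it into two solid handles, so its link is $\calD(V_{1,1}) \join \calD(V_{1,1})$, a single edge, while a case check using \refclm{Join} and \refclm{Handle} shows that every vertex link of $\calD(V_{1,3})$ and of $\calD(V_{0,6})$ is infinite. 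In each instance the existence of a vertex with the prescribed small link is an isomorphism invariant separating $\calD(V)$ from its would-be partners, so once more $V \homeo W$.

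The main obstacle is precisely the small-dimensional analysis of the last paragraph: the uniform machinery of \refclm{Cut} needs $e \ge 3$, so $V_2$ cannot be recognized by cut systems and must instead be told apart from $V_{1,3}$ and $V_{0,6}$ by the finiteness of a single vertex link. Verifying that every cut system has exactly $g$ disks, and checking that the advertised small links genuinely occur in $\calD(V_{0,4})$, $\calD(V_{1,2})$ and $\calD(V_2)$ but nowhere among their competitors, are the points demanding care.
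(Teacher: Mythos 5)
Your argument is correct and follows the paper's proof essentially verbatim: the $e\geq 3$ case is handled exactly as in the paper (dimension recovers $\xi$, the combinatorial characterization of cut systems in \refclm{Cut} recovers $g = |\DD|$, and these two determine $n$), while the small cases are dispatched case-by-case following \refsec{Small}. The only difference is that you actually carry out the case-by-case check for $V_{0,4}$, $V_{1,2}$ and $V_2$ (distinguishing them from $V_{1,1}$, $V_{0,5}$ and $V_{1,3}, V_{0,6}$ respectively via cardinality and finite vertex links), which the paper leaves to the reader; your verifications there are correct.
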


This is the handlebody version of~\cite[Lemma 4.5]{Korkmaz99}
and~\cite[Lemma 2.1]{Luo00}.

\begin{proof}[Proof of~\reflem{Characterize}]
When $e(V) \leq 2$ this can be checked case-by-case, following
\refsec{Small}.  When $V$ has $e(V) \geq 3$ then $\xi(V) = \xi(W)$.
By \refclm{Cut} the handlebodies $V$ and $W$ have cut systems of the
same size.  It follows that $V, W$ have the same genus and thus the
same number of spots.
\end{proof}

We now have: 

\begin{proof}[Proof of \reflem{Types}]
Let $V = V_{g,n}$ and fix $\phi \in \Aut(\calD(V))$.  When $e(V) \leq
2$, \reflem{Types} can be checked case-by-case, following
\refsec{Small}.  So suppose that $e(V) \geq 3$.

The automorphism $\phi$ must preserve the set of non-separating disks
by \refclm{NonSep}.

Suppose that $E \in \calD(V)$ is a separating disk yet not a pants
disk.  Writing $V \setminus \neigh(E) = X \cup Y$ we have $\link(E) =
\calD(X) \join \calD(Y)$.  By \refclm{Join} this join is realized
uniquely and so we can recover $\calD(X)$ and $\calD(Y)$.  By
\reflem{Characterize} we may deduce, combinatorially, the genus and
number of spots of $X$ and $Y$.  Thus $\phi$ preserves the topological
type of $E$.

The only topological type remaining is the set of pants disks.  Since
all other types are preserved, so are the pants disks.  We are done.
%%% Less sneaky: pants disks have large link and cannot lie inside of
%%% cut systems.  
\end{proof}

\section{Regluing}
\label{Sec:Reglue}

Suppose that $\phi_\DD \in \Aut(\calD(V))$ fixes $\DD$.  By
\reflem{SpottedBalls} there is a homeomorphism $f$ of $V' = V
\setminus \neigh(\DD)$ so that the induced geometric automorphism
equals $\phi_\DD|\link(\DD)$.  We must show that $f$ gives a
homeomorphism of $V$: that is, for every $i$ the spots $D_i^\pm$ are
preserved by $f$.

Let $\handle_i(\DD) \subset \link(\DD)$ be the collection of handle
disks $E \in \calD(V)$ such that
\begin{itemize}
\item
one component of $V \setminus \neigh(E)$ is a solid handle containing
$D_i$ and
\item
$E$ is disjoint from all of the $D_j$.
\end{itemize}

Let $\pants_i(\DD) \subset \calD(V')$ be the collection of pants disks
$E$ such that one component of $V' \setminus \neigh(E)$ is a solid
pants meeting the spots $D_i^\pm$.

By the claims in the previous section the set $\handle_i(\DD)$ is, for
all $i$, combinatorially characterized and so preserved by
$\phi_{\DD}$.  It follows that the homeomorphism $f \in \Homeo(V')$
preserves the set $\pants_i(\DD)$, for all $i$.  Now, suppose that
$f(D_1^+), f(D_1^-) = A, B$ where $A, B$ are spots of $V'$.  Let $E
\in \pants_1(\DD)$ be any pants disk.  Then $f(E)$ is a pants disk
cutting off $A$ and $B$.  It follows that the spots $A, B$ (in some
order) equal the spots $D_1^\pm$ as desired.

\section{Duality}
\label{Sec:Dual}

Recall that two disks $D, E \in \calD(V)$ are {\em dual} if $i(\bdy D,
\bdy E) = 2$ (see \reffig{Dual}).  A {\em pentagon} $P \subset
\calD(V_{0,5})$ is a collection of five disks $P = \{ E_i \}_{i =
0}^4$ so that $E_i$ and $E_{i+1}$ are disjoint, for all $i$ (modulo
five).  We say that the disks $E_i, E_{i+2}$ are {\em non-adjacent} in
$P$, for all $i$ (modulo five).

\begin{lemma}[Pentagon Lemma]
\label{Lem:Pentagon}
Suppose that $V = V_{0,5}$.  Two disks $D, E \in \calD(V)$ are dual if
and only if there is a pentagon $P$ so that $D, E \in P$ and $D, E$
are non-adjacent in $P$.  
\end{lemma}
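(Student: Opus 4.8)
The plan is to pass to the curve complex and argue with geometric intersection numbers. By \reflem{SpottedBalls} the boundary map gives an isomorphism $\calD(V_{0,5}) \isom \calC(S_{0,5})$, so I will work entirely in $S = S_{0,5}$, replacing each disk by its boundary curve. Under this translation, duality becomes the condition $i(\alpha, \beta) = 2$ and a pentagon becomes a cyclically ordered $5$--tuple of curves whose consecutive members are disjoint. Two structural facts about $S$ drive everything. First, a pants decomposition of $S$ has exactly $\xi(S) = 2$ curves, so $S$ contains no three pairwise disjoint essential non-peripheral curves; equivalently, any two disjoint essential curves form a pants decomposition. Second, for any curve $\gamma$ the link $\link(\gamma)$ is the Farey graph $\calC(S_{0,4})$ of the four--holed sphere that $\gamma$ bounds, in which distinct vertices always intersect and $i = 2$ holds exactly for Farey--adjacent vertices.

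For the reverse direction (dual implies pentagon) I would invoke the change of coordinates principle: $\Mod(S)$ acts transitively on dual pairs, so it suffices to exhibit one pentagon through a single standard dual pair. Placing the five punctures in convex position and normalizing $D, E$ to the round curves about $\{1,2\}$ and $\{1,3\}$, I would close up the pentagon with explicit representatives of the curves about $\{4,5\}$, $\{2,4\}$ and $\{3,5\}$, checking that each is disjoint from its two cyclic neighbours. The only mildly delicate point is that the two neighbours of a given curve may themselves intersect (being dual), but each curve can still be drawn tightly about its own pair of punctures so as to avoid both. This direction is routine.

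The forward direction is where the work lies. Given a pentagon $\{E_i\}_{i=0}^4$ with $E_0, E_2$ non-adjacent, the lower bound $i(\bdy E_0, \bdy E_2) \geq 2$ is immediate: the disks $E_0, E_2$ are both disjoint from $E_1$ and distinct, so were they also disjoint we would have three pairwise disjoint curves, which is impossible. Moreover one checks that $E_3$ and $E_4$ must each cross $E_1$: were $E_3$ disjoint from $E_1$ it would lie, together with $E_2$, in the four--holed sphere $S \setminus \bdy E_1$, giving two distinct disjoint curves there, which $S_{0,4}$ does not admit; the same argument with $E_0$ applies to $E_4$. Thus the two closing curves genuinely leave $\link(E_1)$, and it is precisely the cyclic closing--up through $E_3$ and $E_4$ that must be exploited to control $i(\bdy E_0, \bdy E_2)$.

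The main obstacle is the matching upper bound $i(\bdy E_0, \bdy E_2) \leq 2$, equivalently that $E_0$ and $E_2$ are Farey--adjacent in $\link(E_1) \isom \calC(S_{0,4})$. I would normalize $E_0, E_1$ to a standard pants decomposition by change of coordinates, record $E_2$ by its Farey slope relative to $E_0$, and then show that the disjointness relations forcing $E_4$ off both $E_0$ and $E_3$ and $E_3$ off both $E_2$ and $E_4$ (with $E_3, E_4$ crossing $E_1$) constrain that slope to be a Farey neighbour of $E_0$. Concretely, I expect to track the arcs cut on the four--holed spheres $S \setminus \bdy E_0$ and $S \setminus \bdy E_2$ by the closing curves and to bound how far $E_2$ can be pushed from $E_0$ while still admitting $E_3$ and $E_4$. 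Ruling out Farey distance $\geq 2$ is the crux, and I anticipate that it will require an explicit model of $S_{0,5}$ together with a short case analysis rather than a soft argument.
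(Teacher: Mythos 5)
Your reduction to $\calC(S_{0,5})$ via \reflem{SpottedBalls} is exactly the paper's first move, and your reverse direction (transitivity of $\Mod(S_{0,5})$ on dual pairs plus one explicit pentagon) is sound. The problem is the forward direction: you correctly isolate the crux --- that non-adjacency in a pentagon forces $i(\bdy E_0,\bdy E_2)\leq 2$, i.e.\ Farey adjacency in $\link(E_1)\isom\calC(S_{0,4})$ --- but you do not prove it. You write that you ``expect'' the disjointness constraints on $E_3,E_4$ to pin down the slope of $E_2$ and that you ``anticipate'' a case analysis in an explicit model; that anticipated case analysis \emph{is} the content of the lemma, and without it the proof is incomplete. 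Your preliminary observations (the lower bound $i\geq 2$, and that $E_3,E_4$ must cross $E_1$) are correct but are the easy part; nothing you have written yet rules out $E_2$ sitting at Farey distance $\geq 2$ from $E_0$ in $\link(E_1)$ while still admitting closing curves $E_3,E_4$.

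The paper closes this gap by citation rather than computation: the pentagon lemma for $S_{0,5}$ (Korkmaz, Theorem~3.2 of his thesis, or Luo, Lemma~4.2) asserts that there is a \emph{unique} pentagon in $\calC(S_{0,5})$ up to the action of the mapping class group. That single transitivity statement delivers both directions at once --- any pentagon is equivalent to the standard one, in which non-adjacent curves are checked to be dual, and any dual pair sits in the image of the standard pentagon. So to complete your argument you should either import that result as the paper does, or actually carry out the normalization-and-case-analysis you sketch; the latter is genuinely the harder half of the lemma and cannot be waved through with a soft argument.
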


\begin{proof}
Recall that $\calD(V_{0,5}) \isom \calC(S_{0,5})$, by
\reflem{SpottedBalls}.  The {\em pentagon lemma} for $S_{0,5}$
(see~\cite[Theorem 3.2]{Korkmaz99} or~\cite[Lemma 4.2]{Luo00}) implies
that there is only one pentagon in $\calD(V_{0,5})$, up to the action
of the handlebody group.
\end{proof}

\begin{lemma}
\label{Lem:Dual}
Suppose that $V = V_{g,n}$ has $e(V) \geq 3$.
%%% A crucial use of the assumption.
Two disks $D, E \in \calD(V)$ are dual if and only if there is a
simplex $\sigma \in \calD(V)$ with
\begin{itemize}
\item
$\link(\sigma) \isom \calD(V_{0,5})$, 
\item
$D, E$ are non-adjacent in some pentagon of $\link(\sigma)$. 
\end{itemize}
\end{lemma}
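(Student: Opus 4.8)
The plan is to reduce both implications to the Pentagon Lemma (\reflem{Pentagon}) for $V_{0,5}$, via the following principle: if $W$ is a sub-handlebody of $V$ whose positive boundary $\bdy_+ W$ is an essential (incompressible) subsurface of $\bdy_+ V$ -- which happens whenever the frontier disks of $W$ are essential, since the boundary of an essential disk is an essential curve -- then the geometric intersection number of two curves carried by $\bdy_+ W$ agrees whether computed in $\bdy_+ W$ or in $\bdy_+ V$. Consequently a pair of disks lying in such a $W$ is dual in $W$ if and only if it is dual in $V$. Thus the entire content of the lemma is to recognise, combinatorially, when $D$ and $E$ lie together in a copy of $V_{0,5}$ embedded in $V$ as the ``essential part'' of the complement of a disk system $\sigma$.

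For the backward direction, suppose $\sigma$ is as in the statement. Writing the complement $V \setminus \neigh(\sigma)$ as a disjoint union of handlebodies, $\link(\sigma)$ is the join of the disk complexes $\calD(W_j)$ over those components $W_j$ with non-empty disk complex. Since $\calD(V_{0,5})$ is not a join (\refclm{NotJoin}), exactly one such component occurs, say with $\calD(W) \isom \calD(V_{0,5})$; as $V_{0,5}$ is not among the exceptions of \reflem{Characterize}, we get $W \homeo V_{0,5}$. Now $D, E \in \link(\sigma) = \calD(W)$ are non-adjacent in a pentagon, so \reflem{Pentagon} makes them dual in $W$, hence dual in $V$ by the principle above.

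For the forward direction, suppose $D, E$ are dual. By the definition of duality a closed regular neighborhood $N = \Neigh(D \cup E)$ is a four-spotted ball with essential frontier disks $F_1, \dots, F_4$, and $\bdy_+ N = \Neigh(\bdy D \cup \bdy E)$ is a four-holed sphere in $\bdy_+ V$ with $e(N) = 2$. I want to enlarge $N$ to a five-spotted ball $W \homeo V_{0,5}$ containing $D$ and $E$. A count of Euler characteristics across the gluing disks $F_i$ gives $e(\text{complement of } N) = e(V) - 2 \geq 1$, so the complement carries at least one solid pair of pants. The plan is to attach such a solid pants $Q$ to $N$ along one frontier disk $F_i$, with the two remaining cuffs of $Q$ essential disks of $V$; then $W = N \cup_{F_i} Q$ is a five-spotted ball with $D, E \subset N \subset W$ still dual in $W$. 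Finally set $\sigma$ to be the frontier disks of $W$ that are non-peripheral in $V$, together with a pants decomposition of the complement of $W$; then $V \setminus \neigh(\sigma)$ consists of $W$ and solid pants, so $\link(\sigma) = \calD(W) \isom \calD(V_{0,5})$, and \reflem{Pentagon} applied in $W$ exhibits the required pentagon with $D, E$ non-adjacent. When $V \homeo V_{0,5}$ already, no enlargement is needed and one takes $\sigma = \emptyset$.

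The main obstacle is this enlargement step: showing that for every $V$ with $e(V) \geq 3$ one can genuinely find the solid pants $Q$ adjacent to some $F_i$ whose two new cuffs are essential disks, so that $N \cup_{F_i} Q \homeo V_{0,5}$ and the complement cuts cleanly into solid pants. This requires choosing a pants decomposition of the complement of $N$ that realises a frontier disk as a cuff of a solid pants with meridian cuffs, and one must check separately that peripheral frontier disks simply become spots of $W$ rather than interfering with the identification $\link(\sigma) \isom \calD(V_{0,5})$; the genus of $V$ is harmless because it is confined to the complement and never enters $W$. Once the configuration is in place, the intersection-number transfer between $\bdy_+ W$ and $\bdy_+ V$ and the invocation of \reflem{Pentagon} are routine.
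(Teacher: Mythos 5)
Your strategy coincides with the paper's: reduce both directions to the Pentagon Lemma by locating $D$ and $E$ inside a cleanly embedded five-spotted ball, recognised combinatorially as the unique component of $V \setminus \neigh(\sigma)$ with non-empty disk complex. Your backward direction is complete, and is in fact more detailed than the paper's one-sentence version: the join decomposition of $\link(\sigma)$, \refclm{NotJoin} to rule out a second non-trivial factor, and \reflem{Characterize} to pin down $V_{0,5}$ is exactly the intended argument, and the intersection-number transfer for the incompressible subsurface $\bdy_+ W \subset \bdy_+ V$ that you isolate at the outset is the right justification for moving duality between $W$ and $V$ in both directions.

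The gap is the step you yourself flag and then leave open: you never prove that a solid pants $Q$ with essential cuffs can actually be attached along a frontier disk of $N$. Your Euler characteristic count only shows that the complement of $N$ carries a solid pants \emph{somewhere}, not one adjacent to $N$. The paper closes this with a short counting argument which you should supply. Isotope $X = N$ to be cleanly embedded and let $\EE$ be \emph{any} pants decomposition of $V' = V \setminus \neigh(X)$. Since $V$ is connected, every component of $V'$ has at least one spot parallel to a spot of $X$; and any component with $e \geq 1$ is cut by $\EE$ into solid pants, with each of its spots lying on exactly one of them. So if no solid pants of $V' \setminus \neigh(\EE)$ had a spot parallel to a spot of $X$, every component of $V'$ would satisfy $e \leq 0$, whence $e(V) = e(X) + e(V') \leq 2 + 0 = 2$, contradicting $e(V) \geq 3$. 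Thus some solid pants $P$ has a spot $F$ parallel to a spot of $X$, and $Y = X \cup \Neigh(F) \cup P$ is the desired five-spotted ball. The remaining worry you raise about peripheral frontier disks is handled by isotoping $Y$ to be cleanly embedded before forming $\sigma$: peripheral spots of $Y$ then coincide with spots of $V$ and are simply omitted from $\sigma$, while the non-peripheral ones, together with a pants decomposition of $V \setminus \neigh(Y)$, give $\link(\sigma) \isom \calD(Y)$ as you describe.
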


It follows that every $\phi \in \Aut(\calD(V))$ preserves duality.  We
will say that a handlebody $W \subset V$ is {\em cleanly embedded} if:
\begin{itemize}
\item
all spots of $W$ are essential in $V$ and
\item
if a spot of $W$ is peripheral in $V$ then it is also a spot of $V$.
\end{itemize}

\begin{proof}[Proof of \reflem{Dual}]
Suppose that $D, E$ are dual.  Let $X$ be the four-spotted ball
containing them.  Isotope $X$ to be cleanly embedded.  Let $\EE$ be a
pants decomposition of $V' = V \setminus \neigh(X)$.  Now, there is at
least one solid pants $P$ in $V' \setminus \neigh(\EE)$ which has a
spot, say $F$, which is parallel to a spot of $X$.  If not then $e(V)
\leq 2$, a contradiction.

Let $Y = X \cup \neigh(F) \cup P$ and notice that this is a
five-spotted ball containing $D$ and $E$, our original disks.  Isotope
$Y$ to be cleanly embedded.  Let $\EE'$ be any pants decomposition of
$V \setminus \neigh(Y)$.  Add to $\EE'$ any spots of $Y$ which are
non-peripheral in $V$.  This then is the desired simplex $\sigma \in
\calD(V)$.  Since $D$ and $E$ are dual the pentagon lemma implies that
there is a pentagon in $\calD(Y)$ making $D, E$ non-adjacent.

The backwards direction follows from \reflem{Characterize}, the
combinatorial characterization of genus and number of spots, and from
the pentagon lemma.
\end{proof}

We now discuss the dual complex.  Fix a cut system $\DD = \{ D_i \}$.
Recall that $\dual_i(\DD)$ is the subcomplex of $\calD(V)$ spanned by
the disks $E \in \calD(V)$ which are dual to $D_i$ and disjoint from
$D_j$ for all $j \neq i$.

Define $V_i$ to be the spotted solid torus obtained by cutting $V$
along all disks of $\DD$ {\em except} $D_i$.  Note that $V_i$ has
exactly $e(V)$--many spots, and this is at least three.  Also, $D_i$
is a meridian disk for $V_i$.  Note that $\dual_i(\DD) \subset
\calD(V_i)$.  A disk $E \in \dual_i(\DD)$ is a {\em simple dual} if
$E$ is a pants disk in $V_i$.

%%% Can connectedness be proven directly via surgery?

Let $\calA_i(\DD)$ be the complex where vertices are isotopy classes
of arcs $\alpha \subset \bdy_+ V_i$ so that
\begin{itemize}
\item
$\alpha$ meets $\bdy D_i$ exactly once, transversely, and
\item
$\bdy \alpha$ meets distinct spots of $V_i$. 
\end{itemize}
A collection of vertices spans a simplex if they can be realized
disjointly.  

If an arc $\alpha \in \calA_i(\DD)$ meets spots $A, B \in \bdy_0 V_i$
then the frontier of $\Neigh(A \cup \alpha \cup B)$ is a simple dual,
$E_\alpha$.  

%%% Perhaps give the discussion: ``Surgery of arcs is much easier to
%%% describe/understand than surgery of disks.''

%%% Of course it would be nice to have a direct ``surgery of disks''
%%% proof... Hmmm. Perhaps ask Hatcher next time I see him...

\begin{lemma}
\label{Lem:DualComplexConnected}
If $e(V) \geq 3$ then the complex $\dual_i(\DD)$ is connected.
\end{lemma}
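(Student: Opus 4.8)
The plan is to reduce the connectivity of $\dual_i(\DD)$ to the connectivity of the arc complex $\calA_i(\DD)$ in two stages: first I would show that the \emph{simple} duals span a connected subcomplex of $\dual_i(\DD)$, and then show that every dual can be joined, inside $\dual_i(\DD)$, to a simple dual. Throughout I work inside the spotted solid torus $V_i$, so that ``disjoint from $D_j$ for $j \neq i$'' is automatic and $D_i$ is a meridian.

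For the first stage I would use the assignment $\alpha \mapsto E_\alpha$ sending an arc $\alpha \in \calA_i(\DD)$ meeting spots $A, B$ to the simple dual $E_\alpha = \fr \Neigh(A \cup \alpha \cup B)$. I would check that this is onto the simple duals: given a simple dual $E$, its solid pants component of $V_i \setminus \neigh(E)$ meets two spots $A, B$, and a core arc of that pants from $A$ to $B$ crosses $\partial D_i$ once and recovers $E$ as $E_\alpha$. I would also check that disjoint arcs give disjoint disks, so a path in $\calA_i(\DD)$ pushes forward to a path through simple duals in $\dual_i(\DD)$. Hence it suffices to prove that $\calA_i(\DD)$ is connected.

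To prove $\calA_i(\DD)$ connected I would take arcs $\alpha, \beta$, put them in minimal position while also minimizing each one's intersection with $\partial D_i$ (so each meets it once), and surger $\beta$ along $\alpha$ at an innermost point, verifying that some resolution again lies in $\calA_i(\DD)$: its endpoints stay on distinct spots, and, the delicate point, it still crosses $\partial D_i$ exactly once. Since a careless surgery can change the number of crossings with $\partial D_i$, I expect to control this by cutting $V_i$ along $D_i$ to obtain the spotted ball $V_{0, e(V)+2}$; this turns each arc of $\calA_i(\DD)$ into a controlled configuration of arcs on the planar surface $\partial_+ V_{0,e(V)+2}$ and reduces the question to the standard connectivity of an arc complex on a sphere with holes. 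Induction on $|\alpha \cap \beta|$ then produces the desired path.

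For the second stage, given an arbitrary dual $E \in \dual_i(\DD)$ that is not already simple, I would surger $E$ to simplify the components of $V_i \setminus \neigh(E)$. Here a neighborhood $X$ of $D_i \cup E$ is a four-spotted ball carrying the genus of $V_i$, so $V_i \setminus \neigh(X)$ is a spotted ball; an innermost-disk argument then yields a disk of $\dual_i(\DD)$ disjoint from $E$ whose complement is strictly simpler, and iterating reaches a pants disk, i.e. a simple dual. I expect the arc-complex step to be the main obstacle: maintaining intersection number exactly one with $\partial D_i$ under surgery while keeping the endpoints on distinct spots is precisely the difficulty that the reduction by cutting along $D_i$ is designed to finesse.
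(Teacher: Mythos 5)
There is a genuine gap in your first stage: the claim that ``disjoint arcs give disjoint disks, so a path in $\calA_i(\DD)$ pushes forward to a path through simple duals'' is false whenever two disjoint arcs share an endpoint spot. If $\alpha$ and $\beta$ are disjoint but both meet the spot $C$, the frontiers of $\Neigh(A \cup \alpha \cup C)$ and $\Neigh(B \cup \beta \cup C)$ both enclose $C$ and in general cannot be made disjoint (and if the arcs share both spots the intersection is worse). So the assignment $\alpha \mapsto E_\alpha$ is not simplicial, and connectivity of $\calA_i(\DD)$ does not directly transfer to $\dual_i(\DD)$. The paper's proof is built around exactly this point: its first claim strengthens the arc-connectivity statement so that consecutive arcs in the path share \emph{at most one} spot, and a separate second claim converts each such edge into an explicit path of length at most four in $\dual(D)$, inserting intermediate disks (the frontier of a neighborhood of $A \cup B \cup C \cup \alpha \cup \beta \cup \Delta$, or an auxiliary arc $\gamma$ built from subarcs of $\alpha$, $\beta$ and $\bdy D$ when the arcs approach $\bdy D$ from opposite sides). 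Without some version of these two steps your reduction does not go through.

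Two further points, less fatal but worth noting. For the connectivity of $\calA_i(\DD)$ itself, the difficulty you flag (keeping intersection number with $\bdy D_i$ equal to one under surgery) is handled in the paper not by cutting along $D_i$ --- which turns each arc into a pair of arcs in the spotted ball and leads to a harder complex of arc-pairs, a reduction you would still have to justify --- but by a direct count: surgering $\gamma$ along a subarc $\alpha'$ of $\alpha$ ending on a spot missed by $\gamma$ produces two arcs $\gamma', \gamma''$, and a parity check shows one of them meets $\bdy D$ exactly once while both have strictly smaller complexity $i(\alpha,\cdot)+i(\beta,\cdot)$. For your second stage, the intended ``innermost-disk'' simplification is not specified (there is no obvious innermost disk, since $E$ is already embedded); the paper instead analyses how $\bdy D \cup \bdy E$ cuts $\bdy U$ into two disks and an annulus and, in each case, explicitly produces an arc $\alpha$ with $E_\alpha$ disjoint from $E$, or an intermediate non-separating dual, giving distance at most two to a simple dual.
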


It suffices to check this for $i = 1$.  To simplify notation we write
$D = D_1$, $U = V_1$, $\dual(D) = \dual_i(\DD)$ and $\calA(D) =
\calA_1(\DD)$.  We will prove \reflem{DualComplexConnected} via a
sequence of claims.

\begin{claim*}
For any pair of arcs $\alpha, \gamma \in \calA(D)$ there is a sequence
$\{ \alpha_k \}_{k = 0}^N \subset \calA(D)$ so that:
\begin{itemize}
\item
the arcs $\alpha_k, \alpha_{k+1}$ are disjoint, for all $k < N$, 
\item
$\alpha_0 = \alpha$ and $\alpha_N = \gamma$, and 
\item
there is at most one spot in common between the endpoints of
$\alpha_k$ and $\alpha_{k+1}$, for all $k < N$. 
\end{itemize}
\end{claim*}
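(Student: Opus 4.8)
Throughout write $S = \bdy_+ U \isom S_{1,m}$ with $m = e(V) \geq 3$, and let $c = \bdy D$ be the meridian; by definition every arc of $\calA(D)$ meets $c$ exactly once and has its two endpoints on distinct spots. The plan is to connect $\alpha$ to $\gamma$ by an induction on the geometric intersection number $|\alpha \cap \gamma|$, the arcs having first been isotoped into minimal position while keeping each transverse to $c$ in a single point. ``Adjacency'' in the sequence we build means exactly: disjoint, and sharing at most one spot between endpoints.

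The base case is $|\alpha \cap \gamma| = 0$. If the endpoints of $\alpha$ and $\gamma$ meet in at most one spot then $\alpha, \gamma$ is already the desired sequence, with $N = 1$. Otherwise $\alpha$ and $\gamma$ run between the same pair of spots $A, B$. Here I use $m \geq 3$: choosing a third spot $C$, an elementary construction inside a neighborhood of $c$ together with a spanning arc to $C$ produces an arc $\beta \in \calA(D)$ running from $A$ to $C$, meeting $c$ once, and disjoint from $\alpha \cup \gamma$. Then $\alpha, \beta, \gamma$ works, since $\beta$ shares only the spot $A$ with each of $\alpha$ and $\gamma$.

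For the inductive step, suppose $|\alpha \cap \gamma| = k > 0$. Let $x$ be an endpoint of $\gamma$, and let $\gamma_x \subset \gamma$ be the subarc from $x$ to the first point $p$ at which $\gamma$ meets $\alpha$; its interior is disjoint from $\alpha$. The point $p$ splits $\alpha$ into two half-arcs. I form $\beta$ by concatenating $\gamma_x$ with one of these halves and pushing the result off $\alpha$. Then $\beta$ is disjoint from $\alpha$ and $|\beta \cap \gamma| \leq k - 1$. The key point, and the reason the hypothesis ``meets $c$ once'' is used, is a parity choice: since $c$ meets each of $\alpha$ and $\gamma$ in a single point, I choose the half of $\alpha$ so that $\beta$ again meets $c$ exactly once --- taking the half missing $c$ when $\gamma_x$ already crosses $c$, and the half containing the crossing otherwise. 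Once $\beta \in \calA(D)$ is produced and shown to be a legitimate edge out of $\alpha$, I apply the inductive hypothesis to the pair $(\beta, \gamma)$ and prepend $\alpha$.

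The main obstacle is the spot bookkeeping that decides whether $\beta$ is a legitimate vertex and edge. The arc $\beta$ inherits one endpoint from $\gamma$ (the spot of $x$) and one from $\alpha$; it is a valid arc sharing at most one spot with $\alpha$ exactly when the spot of $x$ avoids both spots of $\alpha$. When $x$ lands on a spot of $\alpha$ the surgery must be repaired: one may instead grow from the other endpoint $y$ of $\gamma$, or, in the worst case where $\gamma$ too runs between the two spots of $\alpha$, first detour $\alpha$ to a third spot as in the base case and only then surger. I expect this case analysis --- checking in each configuration that the repaired $\beta$ still meets $c$ once, stays disjoint from $\alpha$, shares at most one spot with it, and strictly decreases the intersection with $\gamma$ --- to be the delicate heart of the proof, with $m \geq 3$ furnishing the spare spot needed to break every degeneracy.
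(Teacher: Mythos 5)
Your overall scheme --- resolve $\alpha \cap \gamma$ by surgery and induct on an intersection number --- is the right family of argument, and your parity bookkeeping for keeping the surgered arc in $\calA(D)$ (choosing the half of $\alpha$ so that the new arc meets $\bdy D$ exactly once) is correct. But the degenerate spot configurations that you defer to ``the delicate heart of the proof'' are a genuine gap, and the repairs you sketch do not close it. If both endpoints of $\gamma$ lie on the two spots of $\alpha$, every surgered arc $\beta$ you can form has its $\gamma$-endpoint on a spot of $\alpha$, so $\beta$ either has both endpoints on one spot (not a vertex of $\calA(D)$) or shares both spots with $\alpha$ (not an admissible edge); and since the parity constraint dictates which half of $\alpha$ you must keep, you cannot always escape by switching halves or by growing from the other endpoint $y$ of $\gamma$. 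Your proposed fix --- ``first detour $\alpha$ to a third spot and only then surger'' --- replaces $\alpha$ by an arc $\alpha_1$ whose intersection number with $\gamma$ is completely uncontrolled, so the induction on $|\alpha \cap \gamma|$ collapses. The alternative reading, interpolating an arc adjacent to both $\alpha$ and the surgered $\beta$, reduces to your base-case construction: given two disjoint arcs of $\calA(D)$ sharing both spots, produce a third arc of $\calA(D)$ disjoint from both and reaching a third spot. That construction is asserted (``an elementary construction inside a neighborhood of $c$'') but never carried out, and it is not elementary: one must check that the third spot lies in a complementary component of $\alpha \cup \gamma$ from which $\bdy D$ can be crossed an odd number of times on the way back to a spot of $\alpha$, which requires an argument.

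The paper sidesteps all of this with one device you are missing: before the induction starts it fixes a single auxiliary arc $\beta \in \calA(D)$ disjoint from $\alpha$ and sharing at most one spot with it (this only requires building an arc adjacent to one given arc, which is easy since there are at least three spots), and then inducts on the combined complexity $c(\gamma) = i(\alpha, \gamma) + i(\beta, \gamma)$. Since $\alpha \cup \beta$ meets at least three spots while $\gamma$ meets only two, one of $\alpha, \beta$ always reaches a spot missed by $\gamma$; the surgery is aimed at that spot, so the resulting arc automatically shares only one spot with $\gamma$, and because $\alpha$ and $\beta$ are disjoint, surgering along a subarc of either one never increases the intersection number with the other, so $c$ strictly decreases. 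The base case $c(\gamma) = 0$ is then handled by the path $\{\alpha, \gamma\}$ or $\{\alpha, \beta, \gamma\}$ with the pre-chosen $\beta$, and no ``arc disjoint from two given arcs'' ever needs to be constructed. I recommend restructuring your induction around this combined complexity; without it the degenerate cases are not closed.
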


\begin{proof}
Fix, for the remainder of the proof, an arc $\beta \in \calA(D)$ so
that $\alpha$ and $\beta$ are disjoint and so that the endpoints of
$\alpha$ and $\beta$ share at most one spot.  This is possible as $U$
has at least three spots.  Define the complexity of $\gamma$ to be
$c(\gamma) = i(\alpha, \gamma) + i(\beta, \gamma)$.  Notice if
$c(\gamma) = 0$ then we are done: one of the sequences
$$ 
\{ \alpha, \gamma \} \quad \mbox{or} \quad \{ \alpha, \beta, \gamma \}
$$
has the desired properties.

Now induct on $c(\gamma)$.  Suppose, breaking symmetry, that $\alpha$
meets a spot, say $A \in \bdy_0 U$, so that $\gamma \cap A =
\emptyset$.  If $i(\alpha, \gamma) = 0$ then the sequence $\{ \alpha,
\gamma \}$ has the desired properties.  If not, then let $x$ be the
point of $\alpha \cap \gamma$ that is closest, along $\alpha$, to the
endpoint $\alpha \cap A$.  Let $\alpha' \subset \alpha$ be the subarc
connecting $x$ and $\alpha \cap A$.  Let $N$ be a regular
neighborhood, taken in $\bdy_+ U$, of $\gamma \cup \alpha'$.  The
frontier of $N$, in $\bdy_+ U$, is a union of three arcs: one arc
properly isotopic to $\gamma$ and two more arcs $\gamma', \gamma''$.

The arcs $\gamma'$ and $\gamma''$ are disjoint from $\gamma$ and
satisfy $c(\gamma') + c(\gamma'') \leq c(\gamma) - 1$.  Also, since
$\gamma'$ and $\gamma''$ each have one endpoint on the spot $A$ the
arcs $\gamma'$ and $\gamma''$ have exactly one spot in common with
$\gamma$.  Now, if $\alpha' \cap \bdy D = \emptyset$ then one of
$\gamma', \gamma''$ meets $\bdy D$ once and the other is disjoint.  On
the other hand, if $\alpha' \cap \bdy D \neq \emptyset$ then $\alpha'$
meets $\bdy D$ once.  Thus one of $\gamma', \gamma''$ meets $\bdy D$
once and the other meets $\bdy D$ twice.  In either case we are done.
\end{proof}

Recall that if $\alpha \in \calA(D)$ is an arc then $E_\alpha$ is the
associated simple dual.

\begin{claim*}
If $\alpha, \beta \in \calA(D)$ are disjoint arcs, with at most one
spot in common between their endpoints, then there is an edge-path in
$\dual(D)$ of length at most four between $E_\alpha$ and $E_\beta$.
\end{claim*}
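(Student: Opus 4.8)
The plan is to split on how many spots the endpoints of $\alpha$ and $\beta$ share and, in each case, to produce explicit intermediate simple duals. Throughout I work inside the spotted solid torus $U = V_1$ with its meridian $D$, and I use that a simple dual $E_\gamma$ cuts off a solid pants carrying exactly the two spots met by $\gamma$, with $i(\bdy E_\gamma, \bdy D) = 2$ precisely because $\gamma$ meets $\bdy D$ once. The easy case is when the endpoints of $\alpha$ and $\beta$ lie on four distinct spots, say $\alpha$ joins $A, B$ and $\beta$ joins $C, C'$. Since the arcs are disjoint and the four spots are distinct, the defining closed neighborhoods $\Neigh(A \cup \alpha \cup B)$ and $\Neigh(C \cup \beta \cup C')$ may be pushed apart, so $E_\alpha$ and $E_\beta$ are disjoint vertices of $\dual(D)$ and span an edge. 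This gives a path of length at most one.

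The substantive case is when $\alpha$ and $\beta$ share exactly one spot, say $A$, with $\alpha$ running from $A$ to $B$ and $\beta$ from $A$ to $C$. Here $E_\alpha$ and $E_\beta$ genuinely intersect, so I cannot hope for an edge. Indeed, each of $\bdy E_\alpha$ and $\bdy E_\beta$ co-bounds a pair of pants with two spot-boundaries, and both pants contain $\bdy A$; were the two curves disjoint the pants would be nested or disjoint, which is impossible since they carry different second spots yet both contain $A$. So the core of the argument is to bridge two intersecting simple duals by a path of length at most four.

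The approach I would take is to build intermediate simple duals whose defining arcs avoid the shared spot $A$. Cutting $U$ along $E_\alpha$ yields a spotted solid torus $U_\alpha$ in which the frontier scar of $E_\alpha$ serves as a fresh spot $a$; choosing an arc from $a$ to $C$ meeting $\bdy D$ once produces a simple dual $E_1$ lying in $U_\alpha$, hence disjoint from $E_\alpha$. The symmetric construction on the other side, cutting along $E_\beta$ and using its scar together with the spot $B$, produces $E_2$ disjoint from $E_\beta$. It then remains to join $E_1$ to $E_2$: arranging the two auxiliary arcs to be disjoint makes $E_1$ and $E_2$ disjoint as well, so that $E_\alpha, E_1, E_2, E_\beta$ is an edge-path of length at most four (with a step saved when $E_1$ and $E_2$ may be taken equal, or when a fourth spot is available and no scar is needed).

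The hard part will be the cramped regime $e(V) = 3$, where $U \isom V_{1,3}$ and the three spots are exactly $A, B, C$: there is then no fourth spot on which to anchor an auxiliary arc, which is exactly why the scar created by cutting along a simple dual must be pressed into service as a surrogate spot. A second delicate point is that every intermediate disk must stay genuinely dual to $D$, that is, have geometric intersection exactly two with $\bdy D$; since the relevant arcs can cross $\bdy D$ with either sign, I expect a brief sub-case analysis according to how $\alpha$ and $\beta$ cross $\bdy D$ near $A$, verifying that the auxiliary arcs can be routed to meet $\bdy D$ exactly once rather than zero or two times. Controlling disjointness and the intersection-number bookkeeping simultaneously is the crux, and it is what forces the bound four rather than two.
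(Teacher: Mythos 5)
Your first case (endpoints on four distinct spots) is correct and agrees with the paper, and your observation that $E_\alpha$ and $E_\beta$ must intersect when the arcs share a spot is also right. In the shared-spot case your plan has the right shape --- insert intermediate duals between $E_\alpha$ and $E_\beta$ --- but the two facts you defer are exactly where the proof lives, and as described the construction does not deliver them. Take duality first. Unwinding the scar, your $E_1$ amounts to the frontier of a regular neighborhood of a configuration $A \cup \alpha \cup B \cup \delta \cup C$, where $\delta$ crosses $\bdy D$ once. That configuration meets $\bdy D$ in two isolated points (one on $\alpha$, one on $\delta$), so its frontier meets $\bdy D$ in \emph{four} points, and $E_1$ lies in $\dual(D)$ only if exactly one pair of these cancels along a bigon. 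Whether such a bigon exists depends on which sides of $\bdy D$ the relevant subarcs approach from --- precisely the dichotomy you gesture at in your final paragraph but never carry out. (Phrasing this inside $U_\alpha$ does not help: $D$ meets $E_\alpha$, so $D$ does not survive the cut and ``dual to $D$ in $U_\alpha$'' is undefined; the bookkeeping must happen in $U$.) The paper's proof resolves this by adjoining to the configuration the disk $\Delta$ cut off by the two subarcs running from the shared spot to $\bdy D$ together with subarcs of the spot boundary and of $\bdy D$; the enlarged configuration then meets $\bdy D$ in a single arc, so its frontier meets $\bdy D$ exactly twice. That device is available only when the two subarcs hit $\bdy D$ from the same side, which is why the opposite-side case requires a separate bridging arc and two applications of the same-side case --- and that is where the bound four actually comes from.

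The second gap is the asserted disjointness of $E_1$ and $E_2$. Making the two auxiliary arcs disjoint does not suffice: the defining configurations of $E_1$ and $E_2$ both contain all three spots $A$, $B$, $C$, so the frontiers of their neighborhoods are disjoint only if one configuration can be isotoped into a regular neighborhood of the other, which nothing in your construction arranges; in general two curves each cutting off a four-spotted ball containing $A$, $B$, $C$ by different routes will intersect. So the middle edge of your proposed path $E_\alpha, E_1, E_2, E_\beta$ is unjustified. Both gaps are closed by the paper's route: a single intermediate disk in the same-side case, and a reduction to two same-side steps otherwise.
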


\begin{proof}
If $\alpha$ and $\beta$ share no spots then $\{ E_\alpha, E_\beta \}$
is a path of length one.  Suppose that $\alpha$ and $\beta$ share a
single spot.  Let $A, B, C$ be the three spots that $\alpha$ and
$\beta$ meet, with both meeting $C$.  Let $\alpha', \beta'$ be the
subarcs of $\alpha, \beta$ connecting $C$ to $\bdy D$.  There are two
cases: either $\alpha'$ and $\beta'$ are incident on the same side of
$\bdy D$ or are incident on opposite sides.

Suppose that $\alpha'$ and $\beta'$ are incident on the same side of
$\bdy D$.  Then $\alpha'$ and $\beta'$, together with subarcs of $\bdy
C$ and $\bdy D$ bound a disk $\Delta \subset \bdy U$.  Note that
$\Delta$ may contain spots, but it meets $A \cup B \cup C$ only along
the subarc in $\bdy C$.  It follows that the disk $F$, defined to be
the frontier of
$$
\Neigh\left( (A \cup B \cup C) \cup (\alpha \cup \beta) \cup \Delta
\right),
$$ 
is dual to $D$.  The disk $F$ is also essential as it separates at
least three spots from a solid handle.  So $\{ E_\alpha, F, E_\beta
\}$ is the desired path.

Suppose that $\alpha'$ and $\beta'$ are incident on opposite sides of
$\bdy D$.  Let $d \subset \bdy D$ be either component of $\bdy D
\setminus (\alpha \cup \beta)$.  Let $\alpha'' = \closure{\alpha
\setminus \alpha'}$ and define $\beta''$ similarly.  Define $\gamma
\in \calA(D)$ by forming the arc $\alpha'' \cup d \cup \beta''$ and
using an proper isotopy of $\bdy_+ U$ to make $\gamma$ transverse to
$\bdy D$.  Now apply the previous paragraph to the pairs $\{ \alpha,
\gamma \}$ and $\{ \gamma, \beta \}$ to obtain the desired path of
length four.
\end{proof}

\begin{claim*}
For every dual $E \in \dual(D)$ there is a simple dual connected to
$E$ by an edge-path of length at most two.
\end{claim*}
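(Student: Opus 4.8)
The plan is to realize the desired simple dual as $E_\alpha$ for a suitable arc $\alpha \in \calA(D)$, and to control the distance by bounding the intersections of $\alpha$ with $E$. Recall that a simple dual is exactly a disk of the form $E_\alpha = \fr \Neigh(A \cup \alpha \cup B)$, where $\alpha$ runs between two distinct spots $A, B$ of $U$ and meets $\bdy D$ once. Since $E_\alpha$ is disjoint from $E$ precisely when $\alpha$ can be isotoped off $\bdy E$, it suffices to produce an arc $\alpha$ meeting $\bdy D$ once, with endpoints on two distinct spots, and disjoint from $\bdy E$: this gives an edge in $\dual(D)$ from $E$ to the simple dual $E_\alpha$, that is, distance one. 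The remaining work is to treat the configurations where no such arc exists and to show that in those a single intermediate disk suffices.

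First I would put $\bdy D$ and $\bdy E$ in minimal position, so that $D \cap E$ is a single arc and $i(\bdy D, \bdy E) = 2$, and then split on whether $\bdy E$ separates the surface $\bdy_+ U \cong S_{1,m}$, where $m = e(V) \geq 3$. If $\bdy E$ is non-separating, cutting $\bdy_+ U$ along $\bdy E$ yields a planar surface carrying all $m$ spots together with the two copies of $E$, and the image of $\bdy D$ consists of two arcs. Because the planar surface has at least three spots, I can route an arc between two of them crossing the image of $\bdy D$ exactly once while staying disjoint from $\bdy E$; regluing produces the desired $\alpha$, hence a simple dual at distance one. If $\bdy E$ is separating, it splits $S_{1,m}$ into a genus-one piece $Y$ (which must contain the handle) and a planar piece $Z$. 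Provided $Y$ carries at least two spots, the same routing, now carried out on the handle side so as to cross $\bdy D$ once, produces $\alpha$ disjoint from $E$, again giving distance one.

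The hard part will be the spot-poor separating case, where the handle side $Y$ contains at most one of the spots of $U$. Here any dual of $D$ disjoint from $E$ must have boundary lying entirely on one side of $\bdy E$ yet still cross $\bdy D$ twice, so I must first verify that such an auxiliary dual $F$ exists at all; the duality hypothesis $i(\bdy D, \bdy E) = 2$ is exactly what rules out the degenerate possibility that $E$ cuts off a solid handle containing $D$ (in which case $\bdy D$ and $\bdy E$ could be made disjoint), and this is where I expect the subtlety to concentrate. Granting $F$, I would take it to be a simple dual built from the handle together with the available spot, arrange it disjoint from $E$, and then apply the previous paragraph with $F$ in place of $E$ to obtain a simple dual $E_\alpha$ disjoint from $F$; the concatenation $E, F, E_\alpha$ is then the required edge-path of length two. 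The main obstacle, and the step needing the most care, is thus this exceptional configuration: showing both that a dual disjoint from $E$ always exists and that exactly one intermediary is ever needed, which is what pins down the sharp bound of two.
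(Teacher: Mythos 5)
Your reduction to finding an arc $\alpha \in \calA(D)$ meeting $\bdy D$ once and disjoint from $\bdy E$ is the right first move, but the routing claim that drives your non-separating case is false, and it fails in exactly the configuration that forces the bound of two. Put $\bdy D$ and $\bdy E$ in minimal position; their union cuts $\bdy U$ into two disks $B$, $C$ and an annulus $A$, with the spots distributed among these regions and each of $B$, $C$ containing at least one spot. When $\bdy E$ is non-separating, $B$ and $C$ meet only at the two points of $\bdy D \cap \bdy E$: an arc crossing a sub-arc of $\bdy D$ once passes between $B$ and $A$ or between $C$ and $A$, never directly from $B$ to $C$. So if every spot lies in $B \cup C$ --- which is perfectly possible no matter how large $e(V)$ is --- there is no arc between two distinct spots meeting $\bdy D$ once and $\bdy E$ not at all, and your distance-one conclusion collapses. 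Having ``at least three spots on the planar surface'' is irrelevant; what matters is whether the annulus region $A$ carries a spot. The paper handles the spotless-annulus case by band-summing $E$ with a spot $B' \subset B$ along an arc $\delta$: the frontier of $\Neigh(E \cup \delta \cup B')$ yields a non-separating dual $E'$ disjoint from $E$ whose annulus region does contain a spot, and the distance-one argument applied to $E'$ then gives total distance two.

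You have also misplaced the hard case. When $\bdy E$ is separating, $B$ and $C$ are the two halves of the disk that $\bdy E$ cuts off, adjacent along the sub-arc of $\bdy D$ crossing that disk; a spot of $B$ can always be joined to a spot of $C$ across that sub-arc, so the separating case is always distance one with no exceptional sub-case. Your ``spot-poor separating case'' does not occur (the routing must be done on the planar side of $\bdy E$, which carries both $B$ and $C$ and hence at least two spots --- routing ``on the handle side'' would require two spots in the annulus region instead), and the intermediary you propose there, a ``simple dual built from the handle together with the available spot,'' is not a simple dual at all, since a simple dual $E_\alpha$ requires an arc joining two distinct spots. The genuine gap is therefore the non-separating, spotless-annulus configuration, which your argument never confronts.
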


\begin{proof}
The graph $\bdy E \cup \bdy D$ cuts $\bdy U$ into a pair of disks $B,
C$ and an annulus $A$.  Each of $B, C$ contain at least one spot.

Suppose $E$ is separating.  Then the disks $B, C$ are adjacent along
an subarc of $\bdy D$.  Connect a spot in $B$ to a spot in $C$ by an
arc $\alpha$ that meets $\bdy D$ once and that is disjoint from
$\bdy E$.  Thus $E_\alpha$ is disjoint from $E$.

Suppose $E$ is non-separating.  Then the two disks $B, C$ meet only at
the points of $\bdy D \cap \bdy E$.  Now, if the annulus $A$ contains
a spot then we may connect a spot in $B$ to a spot in $A$ by an arc
$\alpha$ meeting $\bdy D$ once and $\bdy E$ not at all.  In this case
we are done as in the previous paragraph.

If $A$ contains no spots then, breaking symmetry, we may assume
that $B$ contains at least two spots while $C$ contains at least one.
Let $\delta$ be an arc connecting some spot, say $B' \subset B$, to
$E$.  Let $N$ be a regular neighorhood of $E \cup \delta \cup B'$.
Then the frontier of $N$ contains two disks.  One of these is isotopic
to $E$ while the other, say $E'$, is non-separating, dual to $D$, and
divides the spots as described in the previous paragraph.
\end{proof}

Equipped with these claims we have:

\begin{proof}[Proof of \reflem{DualComplexConnected}]
The first two claims imply that the set of simple duals in $\dual(D)$
is contained in a connected set.  The third claim shows that every
vertex in $\dual(D)$ is distance at most two from the set of simple
duals.  This completes the proof.
\end{proof}

\section{Crawling through the complex of duals}
\label{Sec:CrawlDual}

\begin{lemma}
\label{Lem:PreCrawl}
Suppose that $\phi_{\link}$ fixes $\DD$ and $\link(\DD)$.  For any $E
\in \dual_i(\DD)$ the disks $E$ and $\phi(E)$ differ by some power of
$T_i$, the Dehn twist about $D_i$.
\end{lemma}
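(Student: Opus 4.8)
The plan is to put $E' = \phi_{\link}(E)$ and show that $E' = T_i^{m}(E)$ for some $m \in \ZZ$. First I would record the constraints on $E'$. Since $\phi_{\link}$ fixes each $D_j \in \DD$ and is simplicial, $E'$ is disjoint from $D_j$ for every $j \neq i$; since $\phi_{\link}$ preserves duality by \reflem{Dual} and fixes $D_i$, the disk $E'$ is dual to $D_i$. Hence $E' \in \dual_i(\DD)$, so both $E$ and $E'$ sit inside the spotted solid torus $V_i$ as disks dual to the meridian $D_i$. The decisive extra constraint is that $\phi_{\link}$ fixes $\link(\DD) = \calD(V')$ \emph{pointwise}, where $V' = V \setminus \neigh(\DD)$: because $\phi_{\link}$ is a simplicial automorphism fixing every $F \in \calD(V')$, the disk $F$ is disjoint from $E$ if and only if it is disjoint from $E' = \phi_{\link}(E)$. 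Thus $E$ and $E'$ have exactly the same neighbours in $\calD(V')$.

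Next I would pass to the cut handlebody. Cutting $V_i$ along $D_i$ produces the spotted ball $V'$ with two new spots $D_i^{+}, D_i^{-}$, and the disks of $\calD(V')$ are precisely those disks of $V_i$ disjoint from $\bdy D_i$. Each of $E, E'$ meets $D_i$ in a single arc, so cutting along $D_i$ records $E$ (respectively $E'$) as a proper isotopy class of its trace on $V'$, incident to the spots $D_i^{\pm}$. I would also note that $T_i$, being supported in a neighbourhood of $D_i$, acts trivially on all of $\calD(V')$ and changes $E$ only by full twists around $\bdy D_i$; consequently every $T_i^{m}(E)$ has the same $\calD(V')$-neighbours as $E$, matching the constraint derived above. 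So the computed freedom on the $\phi_{\link}$-side is at least the $T_i$-orbit, and the task is to see it is no larger.

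The heart of the argument is that the trace of $E$ on $V'$ is determined, up to twisting around $\bdy D_i$, by the collection of disks of $\calD(V')$ disjoint from it. To prove this I would use that a homeomorphism of $V'$ acting trivially on $\calD(V') \isom \calC(S_{0,e+2})$ is isotopic to the identity: this is exactly \refthm{Korkmaz} together with \reflem{SpottedBalls}, and it applies because $e(V) \geq 3$ forces $V'$ to have at least five spots. Granting this rigidity, the equality of $\calD(V')$-neighbours yields a homeomorphism $h$ of $V'$, fixing the spots $D_i^{\pm}$ and isotopic to the identity on the complementary data, carrying the trace of $E$ to that of $E'$; regluing $D_i^{+}$ to $D_i^{-}$ to recover $V_i$, the only ambiguity in $h$ is the gluing framing, which is realised precisely by the Dehn twist $T_i$ about $D_i$. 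Hence $E' = T_i^{m}(E)$ for some $m \in \ZZ$. The main obstacle is this last determination step: one must check that the same-neighbours condition genuinely produces the matching homeomorphism $h$ and that the regluing ambiguity is exactly $\langle T_i \rangle$ rather than something coarser. The low-complexity failure is instructive: inside a four-spotted ball $\neigh(D_i \cup E)$ the four boundary curves alone pin $E$ down only to the full $\ZZ$-family of meridian duals, which is strictly larger than a single $T_i$-orbit; it is the additional spots of $V'$, available precisely when $e(V) \geq 3$, that supply curves detecting any discrepancy beyond a twist and thereby cut the ambiguity down to $\langle T_i \rangle$.
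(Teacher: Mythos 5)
There is a genuine gap at the ``determination step,'' and it is not merely a technical loose end: the invariant you rely on is too weak to prove the lemma. Your whole argument rests on the claim that the set of disks of $\link(\DD)$ disjoint from $E$ pins down $E$ up to the $T_i$--orbit. It does not. Let $X = \Neigh(D_i \cup E)$ be the four-spotted ball filled by $D_i$ and $E$, and let $h$ be the half-twist about $D_i$ supported in $X$ (so $h^2 = T_i$). Any $F \in \link(\DD)$ disjoint from both $D_i$ and $E$ can be isotoped off of $X$ entirely, and hence is disjoint from $h(E)$ as well; since $\Neigh(D_i \cup h(E))$ is the same four-spotted ball $X$, the converse holds too. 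So $E$ and $h(E)$ have \emph{identical} sets of neighbours in $\link(\DD)$, both are dual to $D_i$ and disjoint from the other $D_j$, yet $h(E)$ is not $T_i^m(E)$ for any $m$ (odd half-twists change, for instance, whether the dual is separating in $V_i$). Your closing remark that the additional spots of $V'$ ``supply curves detecting any discrepancy beyond a twist'' is therefore false: no curve in $\link(\DD)$ detects the half-twist. Relatedly, the intermediate step ``equality of $\calD(V')$-neighbours yields a homeomorphism $h$ of $V'$ carrying the trace of $E$ to that of $E'$'' is not a consequence of \refthm{Korkmaz} or of the injectivity of $\calH(V') \to \Aut(\calD(V'))$; those statements concern automorphisms of the whole complex, not the recovery of an arc from its link, and in the present situation the desired $h$ simply need not exist.

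The missing ingredient is exactly what the paper uses to close this factor-of-two gap: \reflem{Types}. The paper's proof localizes to $X$, notes that the frontier spots of $X$ lie in $\link(\DD)$ and are fixed, so by \reflem{Dual} the image $\phi_{\link}(E)$ is again a dual of $D_i$ contained in $X$; these duals form a single $\ZZ$--family under the half-twist (a Farey-graph picture), and then the parity is fixed by observing that $E$ and $h^{2k+1}(E)$ have different topological types in $V_i$, which $\phi_{\link}|\calD(V_i)$ must preserve. Your set-up (restricting attention to $V_i$ and to the duals of $D_i$) is compatible with this, but without an appeal to preservation of topological types, or some equivalent parity argument, the conclusion $E' = T_i^m(E)$ cannot be reached.
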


\begin{proof}
As usual, it suffices to prove this for $D = D_1$.  Let $U = V_1$.

Let $X \subset U$ be the four-spotted ball filled by $D$ and the dual
disk $E$.  Isotope $X$ to be cleanly embedded.  Let $\FF$ be the
components of $\bdy_0 X$ which are not spots of $U$.  Note that
$\phi_{\link}$ fixes $D$ as well as every disk of $\FF$.  This,
together with \reflem{Dual}, implies that $\phi_{\link}$ preserves the
set of disks that are contained in $X$ and dual to $D$.

Since $\calD(X)$ equipped with the duality relation is a copy of
$\calF$, the Farey graph, it follows that $E$ and $F =
\phi_{\link}(E)$ differ by some number of half-twists about $D$.  If
$E$ and $F$ differ by an odd number of half-twists then $E$ and $F$
have differing topological types, contradicting \reflem{Types} applied
to $\phi_{\link}|\calD(U)$.  Thus $E$ and $F$ differ by an even number
of half-twists, as desired.
\end{proof}

\begin{lemma}
\label{Lem:CrawlDual}
Suppose that $\phi_{\dual}$ fixes $\DD$, $\link(\DD)$, and $\EE$, a
collection of duals (that is, $E_i \in \dual_i(\DD)$).  Then
$\phi_{\dual}$ fixes every vertex of $\dual_i(\DD)$, for all $i$.
\end{lemma}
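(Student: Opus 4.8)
The plan is to \emph{crawl} from the anchor dual $E_i$ out to an arbitrary vertex of $\dual_i(\DD)$, using the connectivity supplied by \reflem{DualComplexConnected}. As usual it suffices to treat the case $i = 1$; write $D = D_1$, $U = V_1$, and let $T = T_1$ be the Dehn twist about $D$ (which restricts on $\bdy_+ U$ to the Dehn twist about $\bdy D$). Since $\phi_{\dual}$ fixes $\DD$ and $\link(\DD)$, \reflem{PreCrawl} applies verbatim with $\phi_{\link} = \phi_{\dual}$: for every $E \in \dual_1(\DD)$ there is an integer $m(E) \in \ZZ$ with $\phi_{\dual}(E) = T^{m(E)}(E)$. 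By hypothesis $\phi_{\dual}$ fixes $E_1$, so $m(E_1) = 0$, and the lemma is precisely the assertion that $m \equiv 0$ on $\dual_1(\DD)$.

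Because $\dual_1(\DD)$ is connected by \reflem{DualComplexConnected}, I would join any given dual to $E_1$ by an edge-path and induct along it. This reduces the whole statement to the following local claim: if $E, E' \in \dual_1(\DD)$ are adjacent (hence disjoint) and $\phi_{\dual}(E) = E$, then $\phi_{\dual}(E') = E'$. Granting the claim, the value $m = 0$ propagates one edge at a time along the path, and the lemma follows.

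To prove the local claim, write $\phi_{\dual}(E') = T^{m}(E')$ with $m = m(E')$. As $\phi_{\dual}$ is a simplicial automorphism it preserves the edge $\{E, E'\}$, and since $\phi_{\dual}(E) = E$ this forces $T^{m}(E')$ to be disjoint from $E$. But $E$ and $E'$ are both dual to $D$, so on $\bdy_+ U$ we have $i(\bdy D, \bdy E) = i(\bdy D, \bdy E') = 2$, while $i(\bdy E, \bdy E') = 0$. The standard twisting estimate for geometric intersection numbers then becomes an equality,
\[
i\bigl(T^{m}(\bdy E'), \bdy E\bigr) = |m| \cdot i(\bdy D, \bdy E') \cdot i(\bdy D, \bdy E) = 4|m|.
\]
Hence $T^{m}(E')$ is disjoint from $E$ only when $m = 0$, giving $\phi_{\dual}(E') = E'$ and establishing the claim.

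The hard part is isolating the local claim and verifying the intersection estimate; once these are in place the crawling is purely formal. The only real care needed is in the twisting formula: one must note that $i(\bdy E, \bdy E') = 0$ turns the usual inequality $\bigl|i(T^{m}\alpha, \beta) - |m|\,i(\delta, \alpha)\,i(\delta, \beta)\bigr| \le i(\alpha, \beta)$ into an exact equality, so that $m \neq 0$ genuinely forces a nonzero intersection and contradicts the disjointness demanded by $\phi_{\dual}$.
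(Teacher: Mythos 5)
Your proposal is correct and follows essentially the same route as the paper: reduce to $i=1$, invoke \reflem{PreCrawl} to see that each dual moves by a power of $T_1$, and then crawl along an edge-path in the connected complex $\dual_1(\DD)$ (\reflem{DualComplexConnected}), showing at each step that disjointness from an already-fixed neighbour forces the twisting power to vanish. The only difference is cosmetic: where you verify the local step with the two-sided twist--intersection inequality (which becomes the equality $i(T^{m}(\bdy E'),\bdy E)=4|m|$ since $i(\bdy E,\bdy E')=0$), the paper argues topologically inside the four-spotted ball filled by $D$ and the fixed neighbour; both correctly conclude $m=0$.
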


%%% Possible to simplify following by looking in 
%%% Y = N(D \cup F \cup G)? 

\begin{proof}
As usual, it suffices to prove this for $D = D_1$.  Let $E = E_1$ and
let $U = V_1$.  We {\em crawl} through $\dual(D) = \dual_1(\DD)$, as
follows.  

Suppose that $F, G \in \dual(D)$ are adjacent vertices and suppose
that $\phi_{\dual}(F) = F$.  By \reflem{PreCrawl}, the disks $G$ and
$G' = \phi_{\dual}(G)$ differ by some number of Dehn twists about $D$.
Also, as $\phi_{\dual}$ is a simplical automorphism the disks $F$ and
$G'$ are disjoint.  Let $X$ be the four-spotted ball filled by $D$ and
$F$.  If $G$ and $G'$ are not equal then $G \cap X$ and $G' \cap X$
are also not equal and in fact differ by some non-zero number of
twists; thus one of $G \cap X$ or $G' \cap X$ must cross $F$, a
contradiction.

Recall that $\phi_{\dual}(E) = E$.  Suppose that $G$ is any vertex of
$\dual(D)$.  Since $\dual(D)$ is connected
(\reflem{DualComplexConnected}) there is a path $\calP \subset
\dual(D)$ connecting $E$ to $G$.  Induction along $\calP$ completes
the proof.
\end{proof}

\section{Crawling through the disk complex}
\label{Sec:CrawlDisk}

Before continuing we will need the following complex:

\begin{definition}[Wajnryb~\cite{Wajnryb98}]
\label{Def:CutGraph}
The {\em cut system graph} $\CG(V)$ is the graph with vertex set being
isotopy classes of unordered cut systems in $V$.  Edges are given by
pairs of cut systems with $g - 1$ disks in common and the remaining
pair of disks disjoint.
\end{definition}

Wajnryb also gives a two-skeleton, but we will only require:

\begin{theorem}[Wajnryb~\cite{Wajnryb98}]
\label{Thm:Wajnryb}
The cut system graph $\CG(V)$ is connected.
\end{theorem}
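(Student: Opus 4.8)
The plan is to fix two cut systems and connect them by an induction on their geometric intersection number. Let $\DD = \{ D_a \}$ and $\DD' = \{ D'_b \}$ be cut systems; after a proper isotopy we may assume their boundaries are in minimal position in $\bdy_+ V$ and, since a handlebody is irreducible, that $D_a \cap D'_b$ contains no simple closed curves for any $a, b$. Set $I(\DD, \DD') = \sum_{a,b} i(\bdy D_a, \bdy D'_b)$ and induct on $I$. If $I = 0$ the cut systems are disjoint and are handled separately below. If $I > 0$ the goal is to produce a cut system $\widehat\DD$ adjacent to $\DD$ in $\CG(V)$ with $I(\widehat\DD, \DD') < I(\DD, \DD')$, so that the inductive hypothesis connects $\widehat\DD$ to $\DD'$ and the single edge $\DD \sim \widehat\DD$ completes the path.

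The inductive step is a disk-surgery argument. Choose $D_a, D'_b$ with $\bdy D_a \cap \bdy D'_b \neq \emptyset$. As there are no closed intersection curves, $D_a \cap D'_b$ is a union of arcs; let $a_0$ be an arc outermost on $D'_b$, cutting off a half-disk $\delta \subset D'_b$ whose boundary is $a_0$ together with a subarc of $\bdy D'_b$. Surgering $D_a$ along $\delta$ replaces $D_a$ by two disks, each disjoint from $D_a$ and meeting $\bdy D'_b$ in strictly fewer points, while meeting each $\bdy D'_{b'}$ ($b' \neq b$) no more than $D_a$ did, since $\delta \subset D'_b$ is disjoint from the other disks of $\DD'$. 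One of the two surgered disks, say $D_a^*$, is again non-separating and, together with the remaining $D_c$ ($c \neq a$), cuts $V$ into a ball. Thus $\widehat\DD = (\DD \setminus \{ D_a \}) \cup \{ D_a^* \}$ is a cut system sharing $g - 1$ disks with $\DD$, with $D_a$ and $D_a^*$ disjoint, so $\DD$ and $\widehat\DD$ span an edge of $\CG(V)$ and $I(\widehat\DD, \DD') < I(\DD, \DD')$ as required.

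For the base case $I = 0$ the disks of $\DD$ and $\DD'$ may be isotoped to be pairwise disjoint. Here I would induct on the number of disks of $\DD$ not isotopic into $\DD'$, using an exchange argument: cutting along $\DD \setminus \{ D_a \}$ leaves a spotted solid torus, and a homology count shows that some $D'_b$ is a meridian of this torus disjoint from $\DD \setminus \{ D_a \}$, so that $(\DD \setminus \{ D_a \}) \cup \{ D'_b \}$ is a cut system adjacent to $\DD$ agreeing with $\DD'$ in one more disk. The main obstacle is the surgery step: one must verify that an outermost surgery genuinely yields a cut system, namely that at least one surgered disk is non-separating with ball complement, that it remains disjoint from the untouched disks $D_c$, and that all circle intersections can first be removed using irreducibility, so that each reduction of $I$ stays inside the graph $\CG(V)$. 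Once this is established, the two inductions combine to connect any pair of cut systems, proving that $\CG(V)$ is connected.
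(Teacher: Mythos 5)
The paper does not actually prove this statement: it is imported wholesale from Wajnryb's work, so any argument you supply is necessarily a ``different route'' from the paper's (which is a citation). Your route is the standard one for connectivity of cut-system-type complexes --- a double induction, with outermost-arc disk surgery to reduce total geometric intersection number, followed by an exchange argument for disjoint cut systems --- and in outline it is correct. In particular, the base case works: since $\DD'$ cuts $V$ into a ball and every disk of $\DD\setminus\{D_a\}$ separates that ball, the spotted solid torus $W_a = V$ cut along $\DD\setminus\{D_a\}$ is cut into balls by $\DD'$, so some $D'_b$ is a meridian of $W_a$, is not boundary-parallel (hence not already in $\DD$), and can be exchanged for $D_a$.

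There is, however, one concrete gap in the surgery step as written, which you half-acknowledge but do not close. You take $a_0$ outermost on $D'_b$ among the arcs of $D_a \cap D'_b$ only. The half-disk $\delta \subset D'_b$ it cuts off is then disjoint from the rest of $\DD'$, but it need not be disjoint from the other disks $D_c$ of $\DD$: arcs of $D_c \cap D'_b$ may lie inside $\delta$. In that case the two disks obtained by surgering $D_a$ along $\delta$ are not disjoint from $\DD \setminus \{D_a\}$, so your $\widehat{\DD}$ is not even a simplex of $\calD(V)$, let alone a cut system adjacent to $\DD$ in $\CG(V)$. The fix is standard but must be stated: choose $a_0$ outermost on $D'_b$ among \emph{all} arcs of $\bigl(\bigcup_a D_a\bigr) \cap D'_b$, and surger whichever disk of $\DD$ contains $a_0$ (which may not be the disk you first fixed). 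With that correction $\delta$ has interior disjoint from all of $\DD$, the surgered disks lie in the spotted solid torus $W_a$, exactly one of them is a meridian of $W_a$ (their boundary classes sum to the meridian class in $H_1$, so one is nontrivial, and a nontrivial disk boundary in a solid torus is a meridian), and it cannot be isotopic to $D_a$ since its intersection with $\DD'$ is strictly smaller. Both the edge in $\CG(V)$ and the strict decrease of $I$ then hold, and the two inductions combine as you describe.
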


For the remainder of this section suppose that $\Phi = \phi_{\dual}$
is an automorphism of $\calD(V)$ and $\DD$ is a cut system so that
$\Phi$ fixes $\DD$, $\link(\DD)$ and $\dual(\DD)$.  

For the crawling step, suppose that $\EE, \FF$ are adjacent in
$\CG(V)$ and that $\Phi$ fixes $\EE$, $\link(\EE)$ and $\dual(\EE)$.
Let $\GG$ be a pants decomposition obtained by adding the new disk of
$\FF$ to $\EE$ and then adding non-separating disks until we have $3g
- 3 + n$ disks.  Let $\{ P_k \}$ enumerate the solid pants of $\GG$.
Let $X_i = P_k \cup P_\ell$ be the four-spotted ball containing $G_i$
in its interior.

Let $\HH, \II = \{ H_i \}, \{ I_i \}$ be collections of disks so that
$H_i, I_i$ are contained in $X_i$ and $G_i, H_i, I_i$ are pairwise
dual in $X_i$.  Now, all of these disks $\GG \cup \HH \cup \II$ lie in
$\EE \cup \link(\EE) \cup \dual(\EE)$.  Thus $\Phi$ fixes all of them.
Thus $\Phi$ fixes $\FF$.  Consider $\Phi|\link(\FF)$.  By
\refthm{Korkmaz} the automorphism $f = \Phi|\link(\FF)$ is geometric.
Let $f$ also denote the given homeomorphism of $V' = V \setminus
\neigh(\FF)$.  Let $\GG' = \GG \setminus \FF$ and $\HH', \II'$ be the
disks of $\HH, \II$ contained in $V'$.  Thus $f$ fixes all disks of
$\GG', \HH', \II'$.  It follows that $f$ permutes the solid pants $\{
P_k \}$.

If $f$ nontrivially permutes $\{ P_k \}$ then, since each $G_i$ is
fixed, we find that adjacent solid pants are interchanged.  This
implies that $V' = P_1 \cup P_2$, a contradiction. 

So $f$ fixes every $P_k$.  Since all disks in $\GG'$ are fixed, $f$ is
either orientation reversing, isotopic to the identity, or isotopic to
a half-twist on each of the $P_k$.  Let $G_i \in \GG'$ be any disk
meeting $P_k$.  Then $f|P_k$ cannot be orientation reversing because
the triple $G_i, H_i, I_i$ determines an orientation on $X_i$ and
hence on $P_k$.  If $f|P_k$ is a half-twist then $P_k$ meets two spots
of $V'$.  Thus $G_i$ meets two solid pants $P_k, P_\ell$ so that $X_i
= P_k \cup P_\ell$.  Now, as $e(V') \geq 3$, the solid pants $P_\ell$
meets at most one spot of $V'$.  Thus $f|P_\ell$ is isotopic to the
identity.  So if $f|P_k$ is a half-twist then $f(H_i) \neq H_i$, a
contradiction.  Deduce that $f$, when restricted to any solid pants,
is isotopic to the identity.  Now, since $f$ fixes all of the $H_i$,
$f$ is isotopic to the identity on $V'$, as desired.

Deduce that $\Phi|\link(\FF)$ is the identity.  As $\Phi$ fixes duals
to $\FF$ by \reflem{CrawlDual} the automorphism $\Phi$ fixes all of
$\dual(\FF)$.  This completes the crawling step and so completes the
proof of:

\begin{theorem}
\label{Thm:AutOfDiskComplex}
If a handlebody $V = V_{g,n}$ satisfies $e(V) \geq 3$ then
the natural map $\calH(V) \to \Aut(\calD(V))$ is a surjection. \qed
\end{theorem}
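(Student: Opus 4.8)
The plan is to take an arbitrary automorphism $\phi \in \Aut(\calD(V))$ and whittle it down to the identity by repeatedly post-composing with geometric automorphisms; since the geometric automorphisms form a subgroup, this exhibits $\phi$ itself as geometric. The genus zero case is already settled by \reflem{SpottedBalls} together with \refthm{Korkmaz}, so I would assume $g \geq 1$ throughout.

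First I would fix a cut system $\DD = \{ D_i \}$. Because cut systems admit the combinatorial characterization of \refclm{Cut}, the image $\phi(\DD)$ is again a cut system; choosing a geometric automorphism $f_{\cut}$ realizing $\phi$ on $\DD$ vertex-wise and setting $\phi_{\cut} = f_{\cut}^{-1} \circ \phi$ yields an automorphism fixing $\DD$ pointwise. Cutting $V$ along a neighborhood of $\DD$ produces a spotted ball $V'$ with $\link(\DD) \isom \calD(V')$, so \refthm{Korkmaz} and \reflem{SpottedBalls} furnish a homeomorphism $f$ of $V'$ inducing $\phi_{\cut} | \link(\DD)$. The regluing argument of \refsec{Reglue} shows that $f$ respects the paired spots of $V'$ coming from $\DD$, hence glues to a geometric automorphism $f_{\link}$ of $V$; then $\phi_{\link} = f_{\link}^{-1} \circ \phi_{\cut}$ fixes $\DD \cup \link(\DD)$.

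Next I would address the dual disks. By \reflem{Dual} the automorphism $\phi_{\link}$ preserves duality, and by \reflem{PreCrawl} each chosen dual $E_i \in \dual_i(\DD)$ is carried to a disk differing from it by a power $T_i^{m_i}$ of the Dehn twist about $D_i$; post-composing with $\prod T_i^{m_i}$ removes this discrepancy and fixes a collection $\EE = \{ E_i \}$ of duals. Since \reflem{DualComplexConnected} guarantees that each $\dual_i(\DD)$ is connected, the crawling argument of \reflem{CrawlDual} propagates fixedness from $E_i$ to every vertex of $\dual_i(\DD)$. Finally, using that the cut system graph $\CG(V)$ is connected (\refthm{Wajnryb}), I would crawl from cut system to adjacent cut system, at each step invoking \refthm{Korkmaz} on the relevant link, to conclude that the resulting automorphism is the identity on all of $\calD(V)$; unwinding the compositions then expresses $\phi$ as a product of geometric automorphisms.

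I expect the main obstacle to be this final crawl through $\CG(V)$. The delicate point is that after handling an adjacent cut system together with its link and its duals, one must rule out the induced homeomorphism restricting to a solid pants as an orientation-reversal or a half-twist: a triple of pairwise dual disks inside each four-spotted ball pins down an orientation, while the fixing of the auxiliary disks forces the restriction to be isotopic to the identity rather than a half-twist. Verifying that fixing one such constellation of disks genuinely transports fixedness to a neighboring cut system — the local-to-global step — is where the real work lies.
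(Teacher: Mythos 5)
Your proposal follows the paper's own argument essentially step for step: the same reduction $\phi = f_{\cut} \circ f_{\link} \circ f_{\dual}$ via \refclm{Cut}, the regluing of \refsec{Reglue}, Lemmas \ref{Lem:Dual}, \ref{Lem:PreCrawl}, \ref{Lem:DualComplexConnected} and \ref{Lem:CrawlDual}, and the final crawl through $\CG(V)$ using \refthm{Wajnryb}, including the same resolution of the half-twist and orientation-reversal issue via triples of pairwise dual disks. This is correct and is the paper's proof.
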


As a corollary:

\begin{theorem}
\label{Thm:AutOfDiskComplexII}
If a handlebody $V = V_{g,n}$ satisfies $e(V) \geq 3$ then
the natural map $\calH(V) \to \Aut(\calD(V))$ is an isomorphism.
\end{theorem}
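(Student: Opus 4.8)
By \refthm{AutOfDiskComplex} the natural map $\calH(V) \to \Aut(\calD(V))$ is already known to be surjective, so the only remaining task is to establish injectivity; equivalently, to show that the kernel is trivial. The plan is therefore to take a mapping class $f \in \calH(V)$ that acts as the identity on $\calD(V)$ and prove, via the Alexander method, that $f$ is itself isotopic (through spot-preserving isotopies) to the identity.

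First I would fix a maximal simplex $\GG$ of $\calD(V)$, that is, a pants decomposition of $V$ by disks, and augment it with the auxiliary dual disks $\HH = \{ H_i \}$ and $\II = \{ I_i \}$ exactly as in \refsec{CrawlDisk}: for each $G_i \in \GG$, with associated four-spotted ball $X_i$, the disks $G_i, H_i, I_i$ are pairwise dual inside $X_i$. Since $f$ fixes every vertex of $\calD(V)$, it fixes each of these disks up to isotopy; as they can be realized disjointly, a single isotopy of $f$ makes it fix the whole collection $\GG \cup \HH \cup \II$ setwise.

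Next I would cut $V$ along $\GG$ to obtain the solid pants $\{ P_k \}$. The key point is that $f$ is now pinned down on each piece: because the ordered triple $G_i, H_i, I_i$ determines an orientation on $X_i$, and hence on the adjacent pants, $f$ cannot be orientation reversing on any $P_k$; and because $f$ fixes each $H_i$, it cannot act as a half-twist on any $P_k$. This is precisely the rigidity computation already carried out at the end of \refsec{CrawlDisk}, applied here to $V$ rather than to $V \setminus \neigh(\FF)$. It follows that $f$ restricts to the identity on every solid pants, and regluing shows that $f$ is isotopic to the identity on $V$; in particular $f$ fixes the spots. Hence the kernel is trivial, and together with surjectivity this proves that the map is an isomorphism.

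I expect the main obstacle to be the combined simultaneous-isotopy and rigidity step: arranging $f$ to fix the filling collection $\GG \cup \HH \cup \II$ all at once, and then ruling out any residual twisting or side-swapping on the complementary solid pants. Both are standard instances of the Alexander method, but the delicate part is verifying that the chosen disks fill $V$ tightly enough that no nontrivial mapping class survives. The oriented triple $G_i, H_i, I_i$ is exactly what removes the last ambiguities, so reusing that configuration from the surjectivity proof is what makes the argument go through cleanly.
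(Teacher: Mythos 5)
Your proposal is correct and follows the same route as the paper: surjectivity from \refthm{AutOfDiskComplex}, then killing the kernel by fixing a pants decomposition $\GG$ augmented with the pairwise-dual triples $G_i, H_i, I_i$ in each four-spotted ball $X_i$ and invoking the rigidity argument from \refsec{CrawlDisk} (no orientation reversal, no half-twists) to conclude $f$ is isotopic to the identity. The paper states this more tersely but the content is identical; your elaboration of why the fixed configuration pins down $f$ on each solid pants is exactly the reasoning the paper relies on.
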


Note that Theorems~\ref{Thm:AutOfDiskComplex}
and~\ref{Thm:AutOfDiskComplexII} are sharp: when $e(V) \leq
2$ the conclusions are false.  See \refsec{Small}.  

\begin{proof}[Proof of \refthm{AutOfDiskComplexII}]
\refthm{AutOfDiskComplex} shows that the natural map is surjective.
Suppose that the mapping class $f$ lies in the kernel.  As in the
discussion of crawling through $\CG(V)$ given above, let $\GG = \{ G_i
\}$ be a pants decomposition of $V$ so that all of the $G_i$ are
non-separating.  Let $\{ P_k \}$ enumerate the solid pants of this
decomposition.  Let $X_i = P_j \cup P_k$ be the four-spotted ball
containing $G_i$ in its interior.  Let $\HH, \II = \{ H_i \}, \{ I_i
\}$ be collections of disks so that $H_i, I_i$ are contained in $X_i$
and $G_i, H_i, I_i$ are pairwise dual in $X_i$.  All of these disks
are fixed by $f$.  It follows that $f$ is isotopic to the identity.
\end{proof}

\section{An application}
\label{Sec:AutOfHandlebodyGroup}

\begin{theorem}
\label{Thm:AutOfHandlebodyGroup}
If $e(V) \geq 3$ then the outer automorphism group of the
handlebody group is trivial.
\end{theorem}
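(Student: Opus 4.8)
### Proof Proposal

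The plan is to follow the standard Ivanov-style argument for showing that the outer automorphism group is trivial, transferring the rigidity of the disk complex (\refthm{AutOfDiskComplexII}) into rigidity of the handlebody group. The key fact to exploit is that conjugation gives a natural map $\calH(V) \to \Aut(\calH(V))$, and I want to show every automorphism of $\calH(V)$ is inner. First I would recall the natural action of $\calH(V)$ on $\calD(V)$: since \refthm{AutOfDiskComplexII} says this action induces an isomorphism $\calH(V) \xrightarrow{\sim} \Aut(\calD(V))$, the group $\calH(V)$ is completely recovered from the combinatorial structure of its action on the disk complex. The strategy is to show that any $\Psi \in \Aut(\calH(V))$ must permute the vertices of $\calD(V)$ in a way compatible with the simplicial structure, hence induce an element of $\Aut(\calD(V)) \isom \calH(V)$, and that this induced element conjugates back to $\Psi$.

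The crucial step is giving an \emph{algebraic} characterization of the vertices of $\calD(V)$ purely in terms of the group $\calH(V)$, so that any abstract automorphism $\Psi$ must respect them. Following Ivanov, I would identify a disk $D$ with (up to finite ambiguity) its Dehn twist $T_D$, or better, with a suitable algebraically-defined subgroup such as a stabilizer or a cyclic/abelian subgroup of twists. The cleanest route is to characterize the conjugacy class of Dehn twists about disks as those elements of $\calH(V)$ whose centralizer has a specific group-theoretic form; two disks are disjoint if and only if their twists commute, so the abstract isomorphism type of the poset of such subgroups reconstructs $\calD(V)$ as a simplicial complex. Given $\Psi \in \Aut(\calH(V))$, this characterization forces $\Psi$ to send disk-twists to disk-twists, inducing a simplicial automorphism $\psi_\Psi \in \Aut(\calD(V))$.

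Once $\psi_\Psi$ is produced, \refthm{AutOfDiskComplexII} supplies a mapping class $g \in \calH(V)$ with $g = \psi_\Psi$ as automorphisms of $\calD(V)$. I would then show that $\Psi$ and conjugation by $g$ agree on all Dehn twists: both send $T_D$ to $T_{g(D)}$, so $\Psi(T_D) = g\, T_D\, g^{-1}$ for every disk $D$. The final step is to upgrade agreement on twists to agreement on all of $\calH(V)$, using that the Dehn twists about disks generate $\calH(V)$ (or at least a finite-index characteristic subgroup), together with a short argument handling the hyperelliptic-type center and any orientation-reversing classes. Then $g^{-1}\Psi$ is the identity on a generating set and hence trivial, so $\Psi$ is inner.

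The main obstacle I expect is the algebraic recognition of disk-twists inside $\calH(V)$: unlike the mapping class group case treated by Ivanov, here the twists live about \emph{disks} rather than arbitrary curves, and the centralizer computations must be carried out within the handlebody group, which is a proper subgroup of $\MCG(\bdy_+ V)$ whose element structure is less uniform. Establishing that the commuting/centralizer pattern distinguishes disk-twists from other torsion or reducible elements, and confirming that these twists generate (a finite-index subgroup of) $\calH(V)$, is the technical heart; everything after the reconstruction of $\psi_\Psi$ is then a formal transfer through \refthm{AutOfDiskComplexII}.
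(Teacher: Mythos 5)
Your proposal follows essentially the same route as the paper: an Ivanov-style algebraic characterization of Dehn twists about disks inside $\calH(V)$ (the paper does this via centralizers and centers of centralizers in a pure finite-index subgroup, first for twists along a pants decomposition of non-separating disks and then in general), from which an abstract automorphism $\Psi$ induces an element of $\Aut(\calD(V))\isom\calH(V)$, and you correctly locate the technical heart in the recognition of disk-twists within the handlebody group rather than the full mapping class group. The one place to be careful is your final step: rather than arguing that disk twists generate $\calH(V)$ (which you should not rely on, especially since $\calH(V)$ here contains orientation-reversing classes), the paper's route --- following Ivanov and Korkmaz --- uses the relation $hT_Dh^{-1}=T_{h(D)}$ together with the uniqueness of roots of twists to show that $g^{-1}\Psi(h)g$ and $h$ act identically on every vertex of $\calD(V)$, and then invokes the injectivity half of Theorem~\ref{Thm:AutOfDiskComplexII} to conclude $\Psi(h)=ghg^{-1}$ for \emph{all} $h$, with no generation statement needed.
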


This may be restated as: $\Aut(\calH) \isom \calH$.  When $g = 0$ then
\refthm{AutOfHandlebodyGroup} follows from \reflem{SpottedBalls} and
the first author's thesis~\cite[Theorem 3]{Korkmaz99}.  For the rest
of this section we restrict to the case $g \geq 1$.

The idea of the proof is to turn an element $\phi \in \Aut(\calH)$
into an automorphism of the disk complex $\calD(V)$.  We do this,
following~\cite{Ivanov88}, by giving an algebraic characterization of
first Dehn twists about non-separating disks and then Dehn twists
generally.  We then apply Theorem~\ref{Thm:AutOfDiskComplex} to $\phi$
to find the correspoding geometric automorphism.  An algebraic trick
then gives the desired result.

A finite index subgroup $\Gamma < \calH$ is {\em pure} if every
reducible class in $\Gamma$ fixes every component of every reducing
set.  For example, the kernel of $\calH \to \Aut(H_1(\bdy_+ V,
\ZZ/3\ZZ))$ is pure.

\begin{lemma}
\label{Lem:NonSeparatingTwists}
Suppose $\Gamma < \calH$ is pure and finite index.  Then $\{f_i\}
\subset \calH$ is a collection of Dehn twists along a pants
decomposition of non-separating disks in $V$ if and only if
\begin{itemize}
\item
the subgroup $A = \langle f_i \rangle$ is free Abelian of rank $\xi(V)$,
\item
$f_i$ and $f_j$ are conjugate in $\calH$, for all $i, j$,
\item
$f_i$ is {\em primitive} in $C_\calH(A)$: $f_i$ is not a proper power
of any $h \in C_\calH(A)$, and
\item
the center of the centralizer of the class $f_i^n$ in $\Gamma$ is
infinite cyclic (for all $i$ and for all $n$ so that $f_i^n \in
\Gamma$): 
$$
C(C_\Gamma(f_i^n)) \isom \ZZ.
$$
\end{itemize}
\end{lemma}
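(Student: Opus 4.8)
The plan is to follow the template of Ivanov~\cite{Ivanov88}: verify the four properties in the forward direction, and then, in the harder backward direction, reconstruct the disks from the algebra. Throughout I would exploit the injection $\calH \hookrightarrow \MCG(\bdy_+ V)$ together with two standard facts: a Dehn twist $T_c$ lies in $\calH$ exactly when $c$ bounds a disk in $V$, and $\calH$ acts transitively on non-separating disks.

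For the forward direction, suppose the $f_i = T_{D_i}$ are twists about a pants decomposition $\{D_i\}$ of non-separating disks. Since the $D_i$ are disjoint and $\xi(V)$ in number, their images in $\MCG(\bdy_+ V)$ generate a free Abelian group of rank $\xi(V)$, giving the first property; the second is immediate from transitivity on non-separating disks. For primitivity, recall that a Dehn twist is a primitive element of $\MCG(\bdy_+ V)$; via the injection it is primitive in $\calH$, hence in the subgroup $C_\calH(A)$. For the last property, purity forces every element of $C_\Gamma(f_i^n)$ to fix $D_i$; the complement $V \setminus \neigh(D_i) \isom V_{g-1,n+2}$ has $e = e(V) \geq 3$ and, in this range, a centerless handlebody group (using \reflem{SpottedBalls} when $g = 1$), so the only elements central in $C_\Gamma(f_i^n)$ are powers of $T_{D_i}$, whence $C(C_\Gamma(f_i^n)) \isom \ZZ$.

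For the backward direction, the substantive part, I would argue in four steps. First, condition (1) makes $A$ a maximal-rank free Abelian subgroup; this rules out any $f_i$ being pseudo-Anosov, since pseudo-Anosov centralizers are virtually cyclic, and by the structure theory of such subgroups, together with the purity of $\Gamma$, it forces each $f_i$ to be a multitwist supported on some multicurve, whose components are pairwise disjoint because the $f_i$ commute. Second, I would invoke condition (4): for a pure multitwist the twists about its reduction curves are central in its centralizer, so the center contains a free Abelian group of rank equal to the number of reduction curves; the hypothesis $C(C_\Gamma(f_i^n)) \isom \ZZ$ therefore forces $f_i$ to be supported on a single curve $c_i$, i.e.\ $f_i = T_{c_i}^{a_i}$, and since $f_i \in \calH$ this $c_i$ bounds a disk $D_i$. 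Third, as $T_{c_i} \in C_\calH(A)$, condition (3) gives $|a_i| = 1$, so $f_i = T_{D_i}^{\pm 1}$. Finally, condition (1) forces the disjoint disks $\{D_i\}$ to number $\xi(V)$, hence to form a pants decomposition, while condition (2) forces them all to share one topological type; since $g \geq 1$ a pants decomposition of disks cannot consist solely of separating disks, so that common type is non-separating.

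The main obstacle is the second step of the backward direction: translating the cyclic-center hypothesis (4) into the statement that the canonical reduction system of $f_i$ is a single curve. This rests on the analysis of centralizers of pure reducible classes, and in particular on identifying the center of such a centralizer with the twist subgroup on the reduction curves. Carrying this out inside $\calH$ rather than the full mapping class group is where care is needed, but the embedding $\calH \hookrightarrow \MCG(\bdy_+ V)$ and the purity of $\Gamma$ should let me import the relevant computations of~\cite{Ivanov88}.
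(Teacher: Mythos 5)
Your template is the paper's template---both run the backward direction through Ivanov's analysis of abelian subgroups and centralizers---but the two places you wave at as ``standard'' are precisely where the paper has to work, and as written both steps have genuine gaps. First, Step 1 is false as stated: a free Abelian subgroup $A < \MCG(\bdy_+ V)$ of maximal rank $\xi(V)$ need \emph{not} consist of multitwists, even after passing to a pure subgroup. By the Birman--Lubotzky--McCarthy count (\cite{BirmanEtAl83}), the number of reduction annuli plus the number of non-pants complementary components of the canonical reduction system of $A$ equals $\xi(V)$, so $A$ may perfectly well contain elements restricting to pseudo-Anosov maps on complexity-one pieces $X \homeo S_{1,1}$ or $S_{0,4}$; purity does not exclude this (a pure class can be pseudo-Anosov on a piece), and in any case purity applies to $f_i^n \in \Gamma$, not to $f_i$. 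The paper spends the bulk of its proof on exactly this case: it first arranges that all the pseudo-Anosov pieces share stable and unstable laminations (otherwise ping-pong produces a free subgroup of $A$), and then derives a contradiction with your condition (4), since for such a piece $X$ both $f = f_1^n$ and a power of the boundary twist $T_\gamma$, $\gamma \subset \bdy X$, lie in $C(C_\Gamma(f))$, forcing rank at least two. So it is condition (4), not condition (1), that kills the pseudo-Anosov components, and your argument never rules them out.

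Second, the rank count in your Step 2 is carried out in the wrong group. The twists about the reduction curves of $f_i$ are central in the centralizer of $f_i^n$ inside $\MCG(\bdy_+ V)$, but they need not lie in $\calH$ at all, so you cannot conclude that $C(C_\Gamma(f_i^n))$ contains a free Abelian group of rank equal to the number of reduction curves. What is true---by McCullough's theorem on twist groups of handlebodies, which is also the fact hiding behind your ``standard fact'' that $T_c \in \calH$ iff $c$ bounds a disk---is that each reduction curve of a multitwist in $\calH$ either bounds a disk or cobounds an essential annulus with another reduction curve, and for an annulus pair only the difference $T_c T_{c'}^{-1}$ lies in $\calH$. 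Hence a multitwist supported on a single annulus pair also has cyclic center of centralizer, and your argument would let $f_i = T_c T_{c'}^{-1}$ through even though it is not a power of a twist about a disk. The paper excludes the annulus pairs by producing a free Abelian subgroup $B < A \cap \Gamma$ supported in the annular neighborhood of the reduction system of rank at least $|\Theta|$; since each annulus pair lowers the achievable rank by one, every reduction curve must bound a disk. Some version of that counting argument has to appear before you may write $f_i = T_{c_i}^{a_i}$ with $c_i$ bounding a disk; the remaining Steps 3 and 4 (primitivity gives $|a_i|=1$, the conjugacy condition plus the fact that a pants decomposition of a positive-genus handlebody contains a non-separating disk) agree with the paper and are fine.
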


\begin{proof}
The forwards direction is identical to the forwards direction
of~\cite[Theorem 2.1]{Ivanov88}.  The backwards direction is similar
in spirit to the backwards direction of~\cite[Theorem 2.1]{Ivanov88}
but some details differ.  Accordingly we sketch the backwards
direction.

The mapping class $f_i$ can not be periodic or pseudo-Anosov as that
would contradict the first property.  Let $\Theta \subset S = \bdy_+
V$ be the {\em canonical reduction system} for the Abelian group
$A$~\cite{BirmanEtAl83}.  Let $\{ X_j \}$ be the components of $S
\setminus \neigh(\Theta)$ and let $\{ Y_k \}$ be the collection of
annuli $\Neigh(\Theta)$.  By \cite[Lemma 3.1(2)]{BirmanEtAl83} the
number of annuli in $\{ Y_k \}$ plus the number of non-pants in $\{
X_j \}$ equals $\xi(V)$.  It follows that every non-pants $X_j$ has
complexity one (so is homeomorphic to $S_{0,4}$ or $S_{1,1}$).

Fix a power $n$ (independent of $i$) to ensure that $f_i^n \in
\Gamma$.  For each $X_j$ of complexity one there is some $f_i^n$ so
that $f_i^n|X_j$ is pseudo-Anosov.  Suppose that $f = f_1^n$, $X =
X_1$ has complexity one, and $f|X$ is pseudo-Anosov.  Let
$\lambda^\pm$ be the stable and unstable laminations of $f|X$.  For
every $i$, the mapping $f_i^n|X$ is either the identity or
pseudo-Anosov.  Note that in the latter case the stable and unstable
laminations of $f_i^n|X$ agree with $\lambda^\pm$: otherwise a
ping-pong argument gives a rank two free group in $A$, a
contradiction.  Thus, perhaps taking a larger power $n$, we may assume
that for each $i$ either $f_i^n|X$ is the identity or identical to
$f|X$.
%%% The stable and unstable determine the pA map, up to powers.  For
%%% build the singular foliations (without saddle connections), build
%%% the singular EE^2 metric, and find the \Teich geodesic.   
For each $i$ where $f_i^n|X = f|X$ we temporarily replace $f_i$ by
$f_i^n f^{-1}$.

Continuing in this manner we find a free Abelian group $B < A \cap
\Gamma$ of rank at least
%%% ``at least'' as two of the complexity one guys could vanish at the
%%% same time. 
$|\Theta|$ where all elements are supported inside of the union of
annuli $\{ Y_k \}$.  Since $B$ is pure, it follows that all elements
of $B$ are compositions of powers of Dehn twists along disjoint
curves.  A theorem of McCullough~\cite{McCullough02} implies that
every curve in $\Theta$ either bounds a disk or cobounds an annulus
with some other curve of $\Theta$.  However, each annulus reduces the
possible rank of $B$ by one; it follows that every curve in $\Theta$
bounds a disk.

Let $\gamma$ be any essential non-peripherial component of $\bdy X$.
It follows that $f$ commutes with $T_\gamma$, that $T_\gamma$ lies in
$\calH$ by the above paragraph, and that $T_\gamma$ to some power lies
in $C(C_\Gamma(f))$.  But this contradicts the fourth property.  It
follows that every component $X_j$ is a pants and that $|\Theta| =
\xi(V)$.  Thus every $f_i$ is a compositions of powers of disjoint
twists.  Again, by the fourth property each $f_i$ is some power of a
single twist.  By the third property (following~\cite{Ivanov88}) $f_i$
is in fact a twist.  Finally, by the second property, each twist is
supported on a disk of the same topological type.  As every pants
decomposition of $V$ must contain a non-separating disk all of the
twists $f_i$ are supported by non-separating disks.
\end{proof}

We now give the general characterization:

\begin{lemma}
\label{Lem:GeneralTwists}
Suppose $\Gamma < \calH$ is pure and finite index.  Then $\{f_i\}
\subset \calH$ is a collection of Dehn twists along a pants
decomposition of $V$ if and only if
\begin{itemize}
\item
the subgroup $A = \langle f_i \rangle$ is free Abelian of rank
$\xi(V)$,
\item
$f_i$ is primitive in $C_\calH(A)$,
\item
for all $i$ and for all $n$ so that $f_i^n \in \Gamma$ either
$C(C_\Gamma(f_i^n)) \isom \ZZ$ or there is a $j$ so that
$C(C_\Gamma(f_i^n)) \isom \ZZ^2$ with the latter given by $\langle
f_i, f_j \rangle$ and $f_j$ is a twist on a non-separating disk.
\end{itemize}
\end{lemma}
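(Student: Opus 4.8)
The plan is to run both directions of \reflem{NonSeparatingTwists}, the one new feature being that the relaxed third property is tailored to admit separating disks, and in particular handle disks, into the pants decomposition.

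For the forward direction, suppose $\{f_i\}$ are the twists along a pants decomposition $\GG = \{G_i\}$ of $V$. The first two properties hold exactly as before: disjoint disks give commuting, independent twists, so $A$ is free Abelian of rank $\xi(V)$, and a single Dehn twist is primitive. For the third property I would read $C(C_\Gamma(T_{G_i}^n))$ off the topology of $V \setminus \neigh(G_i)$. Up to finite index, and after intersecting with the pure group $\Gamma$, the centralizer of $T_{G_i}$ is generated by $T_{G_i}$ together with the handlebody groups of the complementary pieces, so its center is $\langle T_{G_i}\rangle$ enlarged by the centers of those pieces. By \refsec{Small} a solid handle $V_{1,1}$ contributes a single $\ZZ = \langle T_D\rangle$, where $D$ is its non-separating meridian, while every other complementary piece --- a solid pants (finite handlebody group), a $V_{0,4}$, or a piece of complexity at least two --- contributes nothing to the center after passing to $\Gamma$. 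Hence $C(C_\Gamma(T_{G_i}^n)) \isom \ZZ$ for non-separating disks, pants disks, and separating disks with no solid--handle side, while for a handle disk it is $\langle T_{G_i}, T_D\rangle \isom \ZZ^2$ with $T_D$ a non-separating twist, as the property requires. A $\ZZ^3$ would force solid handles on both sides of a separating disk, hence $V = V_2$ and $e(V) = 2$, excluded.

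For the backward direction I would reuse the canonical--reduction--system argument of \reflem{NonSeparatingTwists} verbatim up to the conclusions that every curve of the canonical reduction system $\Theta$ bounds a disk, that the number of annuli plus the number of non-pants complementary pieces equals $\xi(V)$, and that every non-pants piece has complexity one, so is an $S_{1,1}$ or an $S_{0,4}$. The new work is to exclude both. An $S_{1,1}$ piece is impossible outright: if $f_i^n$ were pseudo-Anosov on it, then $f_i^n$ would restrict to an element of $\calH(V_{1,1}) \isom \ZZ \rtimes K_4$ of the corresponding solid handle, and no such element acts as a pseudo-Anosov on $S_{1,1} = \bdy_+ V_{1,1}$. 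An $S_{0,4}$ piece caps off to a $V_{0,4}$ glued to the rest along disks; the pseudo-Anosov root supported on it, together with the boundary twists of the piece, lies in $C(C_\Gamma(f_i^n))$ and either raises the rank above two or realizes a $\ZZ^2$ whose extra generator is not a twist on a non-separating disk, contradicting the third property. Thus every complementary piece is a solid pants and $\Theta$ is a pants decomposition of $V$.

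The final step, which I expect to be the main obstacle, is to upgrade the conclusion that $\{f_i\}$ is a full--rank subgroup of the twist lattice $\langle T_\gamma : \gamma \in \Theta\rangle$ to the statement that each $f_i$ is a single Dehn twist. Primitivity forces each $f_i$ to be a primitive vector, and the rank bound from the third property forces the support of each $f_i$ to consist of at most two curves; excluding genuine two--curve multitwists, and matching the one permitted $\ZZ^2$ exactly to the handle--disk/meridian pair, is the delicate point and requires the precise form of the rank--two clause. This, together with the center--of--centralizer computations inside $\Gamma$ --- above all the vanishing of the center for complementary pieces of complexity at least two --- carries the real weight, and closely follows the backward direction of Ivanov's~\cite[Theorem 2.1]{Ivanov88}.
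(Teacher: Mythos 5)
Your proposal is correct and follows the same overall route as the paper's (very compressed) proof: the forward direction reads $C(C_\Gamma(f_i^n))$ off the complementary pieces exactly as the paper does by citing Ivanov, with the handle disk as the one case producing a $\ZZ^2$ via the meridian twist, and the backward direction reruns the canonical-reduction-system argument of \reflem{NonSeparatingTwists} and uses the sharpened third property to kill the complexity-one pieces. The one genuine divergence is your treatment of an $S_{1,1}$ piece: you exclude it topologically, noting that it is the positive boundary of a solid handle cut off by a disk (every curve of $\Theta$ bounds a disk at that stage of the argument) and that $\calH(V_{1,1}) \isom \ZZ \rtimes K_4$ admits no pseudo-Anosov restriction, whereas the paper handles both complexity-one cases uniformly by observing that the boundary twist of the piece lies in $C(C_\Gamma(f_i^n))$ and is a twist on a \emph{separating} disk, violating the third property. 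Your variant is cleaner and more self-contained for that case, but, as you correctly note, it cannot be extended to the $S_{0,4}$ piece, since $\calH(V_{0,4}) \isom K_4 \rtimes \PGL(2,\ZZ)$ does contain pseudo-Anosov restrictions, so the centralizer argument is still needed there. Finally, the step you flag as the main remaining obstacle --- upgrading ``$f_i$ is a multitwist supported on $\Theta$'' to ``$f_i$ is a single twist'' --- is dispatched in the paper exactly as in \reflem{NonSeparatingTwists}: a multitwist on two or more curves forces $C(C_\Gamma(f_i^n))$ to contain a rank-two subgroup not of the permitted form $\langle f_i, f_j \rangle$ with $f_j$ in the collection and a twist on a non-separating disk; your observation that the precise wording of the rank-two clause is what carries this step is accurate, and the paper is no more detailed here than you are.
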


\begin{proof}
Suppose that $\{ D_i \}$ is a pants decomposition and $f_i$ is the
positive twist on $D_i$.  Then $A = \langle f_i \rangle$ is free
Abelian of the correct rank.  The second property follows as $A =
C_\calH(A)$.  The third property follows from Ivanov's
discussion~\cite{Ivanov88} except if $D_i$ is a handle disk.  In this
case the meridian of the handle, say $D_j$, gives a twist $f_j$ which
lies in the center of the centralizer.

The backwards direction is similar to that of the proof of
\reflem{NonSeparatingTwists}.  The only change occurs when $f|X$ is
pseudo-Anosov: when the center of the centralizer has rank two then
the additional element is a twist about a separating disk and this
contradicts the third property.
\end{proof}

The following lemmas follow from the idential statements for the
mapping class group of $S$~\cite{IvanovMcCarthy99}:

\begin{lemma}
\label{Lem:FarCommutivity}
Suppose $D$ and $E$ are essential disks.  The twists $T_D, T_E$
commute if and only if $D$ and $E$ can be made disjoint via proper
isotopy. \qed
\end{lemma}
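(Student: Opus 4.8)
The plan is to push the entire question onto the boundary surface $S = \bdy_+ V$ and then invoke the cited surface-theoretic result. Recall from the discussion after \refdef{MCG} that the natural homomorphism $\calH(V) \to \MCG(\bdy_+ V)$, given by $f \mapsto f | \bdy_+ V$, is an injective homomorphism. The disk twist $T_D$ is supported in a regular neighborhood of $D$ and restricts on $\bdy_+ V$ to the Dehn twist $T_{\bdy D}$ about the essential curve $\bdy D$; likewise $T_E$ maps to $T_{\bdy E}$. Since the homomorphism is injective, $T_D$ and $T_E$ commute in $\calH(V)$ if and only if $T_{\bdy D}$ and $T_{\bdy E}$ commute in $\MCG(\bdy_+ V)$.

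Next I would apply the surface version of the statement directly. By the identical result for the mapping class group of a surface~\cite{IvanovMcCarthy99}, two Dehn twists about essential simple closed curves in $S$ commute if and only if the curves have geometric intersection number zero, that is, $\bdy D$ and $\bdy E$ can be isotoped in $\bdy_+ V$ to be disjoint. Combining this with the previous paragraph, $T_D$ and $T_E$ commute in $\calH(V)$ exactly when $i(\bdy D, \bdy E) = 0$.

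It then remains to convert disjointness of the boundary curves into disjointness of the disks themselves via proper isotopy, and this is the step I expect to require the most care. One direction is immediate: if $D$ and $E$ admit disjoint representatives then their boundaries are disjoint and the two twists have disjoint support, hence commute. For the converse, suppose $\bdy D \cap \bdy E = \emptyset$ after an isotopy of the boundaries. Put $D$ and $E$ in general position; since their boundaries are disjoint, $D \cap E$ is a finite union of circles with no arcs. Choose a circle innermost on $E$, bounding a subdisk $E' \subset E$; this circle also bounds a subdisk $D' \subset D$, and $D' \cup E'$ is an embedded sphere. Because a handlebody is irreducible, this sphere bounds a ball, across which one isotopes $E'$ past $D'$ to remove the intersection circle, strictly reducing $|D \cap E|$. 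Iterating clears all intersection circles. Every such isotopy is performed in the interior of $V$, away from the spots, so it assembles into a genuine proper isotopy of the pair with track disjoint from $\bdy_0 V$; this produces disjoint representatives of $D$ and $E$ and completes the proof.
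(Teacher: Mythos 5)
Your proof is correct and takes the same route the paper intends: the paper offers no argument beyond citing the identical statement for the mapping class group of $S$ in~\cite{IvanovMcCarthy99}, and your write-up simply fills in that reduction --- injectivity of $\calH(V) \to \MCG(\bdy_+ V)$, the surface result on commuting Dehn twists, and the standard innermost-circle/irreducibility argument promoting disjoint boundary curves to properly isotopic disjoint disks. Nothing further is needed.
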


\begin{lemma}
\label{Lem:Conjugation}
For any twist $T_D$ and for any homeomorphism $h$ we have $h T_D
h^{-1} = T_{h(D)}$. \qed
\end{lemma}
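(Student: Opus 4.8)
The plan is to pull the statement back to the boundary surface, where it becomes the classical conjugation formula for curve twists. Recall from the discussion after \refdef{MCG} that restriction gives an injective homomorphism $\calH(V) \to \MCG(\bdy_+ V)$, sending a class $f$ to $f|\bdy_+ V$. Under this map the disk twist $T_D$ restricts to the curve twist $T_{\bdy D}$ about the boundary curve $\bdy D \subset \bdy_+ V$: the twist $T_D$ is supported in a product neighborhood $N(D) \homeo D \times [-1,1]$, and its trace on $\bdy V$ is a twist in the annulus $N(D) \cap \bdy_+ V$, which is a regular neighborhood of $\bdy D$.

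First I would record the trivial but essential fact that $\bdy(h(D)) = h(\bdy D)$ for every homeomorphism $h$ of $V$, since $h$ preserves $\bdy_+ V$. Applying the restriction homomorphism to the desired identity then reduces it to the equality $(h|\bdy_+ V)\, T_{\bdy D}\, (h|\bdy_+ V)^{-1} = T_{h(\bdy D)}$ inside $\MCG(\bdy_+ V)$. This is precisely the statement cited from~\cite{IvanovMcCarthy99}: conjugating a Dehn twist by any mapping class of a surface yields the twist about the image curve. Because $\calH(V) \to \MCG(\bdy_+ V)$ is injective, and both $h T_D h^{-1}$ and $T_{h(D)}$ restrict to the two (equal) sides of this surface identity, the two classes must already coincide in $\calH(V)$.

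The one point requiring care is orientation: when $h$ reverses the orientation of $\bdy_+ V$ the curve formula produces $T_{h(\bdy D)}^{-1}$ rather than $T_{h(\bdy D)}$, so the equality should be read with the usual sign convention inherited from the curve-complex case, while for orientation-preserving $h$ it holds on the nose. I do not expect a genuine obstacle here, as the content is purely the naturality of twists and the reduction to the surface is forced by injectivity of the restriction map. Alternatively, one can argue directly inside $V$: conjugation carries the support $N(D)$ of $T_D$ homeomorphically onto the neighborhood $h(N(D)) = N(h(D))$ of $h(D)$, and carries the local twisting homeomorphism to the corresponding one supported there, so $h T_D h^{-1}$ is by definition $T_{h(D)}^{\pm 1}$, with the sign again dictated by whether $h$ preserves orientation.
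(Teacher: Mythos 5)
Your argument is exactly the paper's intended one: the paper offers no separate proof, merely noting that the lemma ``follows from the identical statement for the mapping class group of $S$'' via the injective restriction $\calH(V) \to \MCG(\bdy_+ V)$, which is precisely the reduction you carry out. Your caveat about orientation-reversing $h$ producing $T_{h(D)}^{-1}$ is a legitimate observation about the statement's implicit convention, but it does not affect the correctness of the approach.
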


\begin{lemma}
\label{Lem:Powers}
For any pair of disks $D$ and $E$ and any pair of integers $n$ and
$m$, if $T_D^n = T_E^m$ then $D = E$ and $n = m$. \qed
\end{lemma}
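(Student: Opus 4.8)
The plan is to reduce the statement to the corresponding fact for mapping class groups of surfaces, exactly as the preceding lemmas (\reflem{FarCommutivity} and \reflem{Conjugation}) are obtained from \cite{IvanovMcCarthy99}. First I would recall the natural injective homomorphism $\calH(V) \to \MCG(\bdy_+ V)$, noted immediately after \refdef{MCG}, which sends $f \in \calH(V)$ to $f|\bdy_+ V$. Under this map a Dehn twist $T_D$ about a disk $D \in \calD(V)$ is carried to the Dehn twist $T_{\bdy D}$ about the boundary curve $\bdy D \subset \bdy_+ V$. Thus the relation $T_D^n = T_E^m$ in $\calH(V)$ pushes forward to $T_{\bdy D}^n = T_{\bdy E}^m$ in $\MCG(\bdy_+ V)$, and since the map is injective it suffices to derive the conclusion from this latter equation.

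Next I would invoke the well-known fact for surfaces: for essential curves $\alpha, \beta$ in $S$ and integers $n, m$, the equality $T_\alpha^n = T_\beta^m$ forces $\alpha = \beta$ (up to isotopy) and $n = m$. This is standard and is the statement in \cite{IvanovMcCarthy99} that underlies the lemma. Applying it to $\alpha = \bdy D$ and $\beta = \bdy E$ yields $\bdy D = \bdy E$ as isotopy classes of curves in $\bdy_+ V$ and $n = m$ as integers.

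Finally I would upgrade the boundary equality $\bdy D = \bdy E$ to the disk equality $D = E$ in $\calD(V)$. This is where the only genuine content specific to handlebodies enters, and I expect it to be the main (though mild) obstacle. The point is that the inclusion $\calD(V) \to \calC(\bdy_+ V)$ recorded after \refdef{DiskComplex} is injective: distinct proper isotopy classes of essential, non-peripheral disks have distinct boundary curves. Hence $\bdy D = \bdy E$ in $\calC(\bdy_+ V)$ already forces $D = E$ in $\calD(V)$, and together with $n = m$ this completes the proof. The slight subtlety to check is that the proper isotopy of $D$ realizing $\bdy D = \bdy E$ keeps its track disjoint from the spots, as required in our conventions; but this is guaranteed precisely by the injectivity of $\calD(V) \to \calC(\bdy_+ V)$, so no extra work is needed beyond citing that remark.
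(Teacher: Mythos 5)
Your proof is correct and takes essentially the same route as the paper, which simply asserts that the lemma follows from the identical statement for $\MCG(S)$ in \cite{IvanovMcCarthy99}; you have usefully made explicit the two injectivity facts (of $\calH(V) \to \MCG(\bdy_+ V)$ and of $\calD(V) \to \calC(\bdy_+ V)$) on which that reduction rests. (As in the paper's statement, one should implicitly take $n, m \neq 0$, since for $n = m = 0$ both sides are the identity regardless of whether $D = E$.)
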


The proof of \refthm{AutOfHandlebodyGroup} now follows, essentially
line-by-line, the proof of either \cite[Theorem~2]{Ivanov97} or
\cite[Theorem~3]{Korkmaz99}.  \qed

\vspace{\baselineskip}

To extend our algebraic characterization of twists in $\calH(V)$
(Lemmas \ref{Lem:NonSeparatingTwists} and \ref{Lem:GeneralTwists}) to
a characterization of powers of twists inside of finite index pure
subgroups $\Gamma < \calH(V)$ appears to be a delicate matter.
Solving this problem would, following Ivanov~\cite{Ivanov97}, solve:

\begin{problem}
\label{Prob:Comm}
Show that the abstract commensurator of $\calH(V)$ is $\calH(V)$
itself.  Show that $\calH(V)$ is not arithmetic. 
\end{problem}

\bibliographystyle{plain} 
\bibliography{bibfile}

\def\cprime{$'$}
\begin{thebibliography}{10}

\bibitem{BirmanEtAl83}
Joan~S. Birman, Alex Lubotzky, and John McCarthy.
\newblock Abelian and solvable subgroups of the mapping class groups.
\newblock {\em Duke Math. J.}, 50(4):1107--1120, 1983.
\newblock
  \href{http://www.math.columbia.edu/~jb/papers.html}{http://www.math.columbia%
.edu/{$\sim$}jb/papers.html}.

\bibitem{ChoMcCullough06}
Sangbum Cho and Darryl McCullough.
\newblock {The tree of knot tunnels}.
\newblock \href{http://arxiv.org/abs/math/0611921}{arXiv:math/0611921}.

\bibitem{Harvey81}
Willam~J. Harvey.
\newblock Boundary structure of the modular group.
\newblock In {\em Riemann surfaces and related topics: Proceedings of the 1978
  Stony Brook Conference (State Univ. New York, Stony Brook, N.Y., 1978)},
  pages 245--251, Princeton, N.J., 1981. Princeton Univ. Press.

\bibitem{Ivanov88}
N.~V. Ivanov.
\newblock Automorphisms of {T}eichm\"uller modular groups.
\newblock In {\em Topology and geometry---Rohlin Seminar}, volume 1346 of {\em
  Lecture Notes in Math.}, pages 199--270. Springer, Berlin, 1988.

\bibitem{Ivanov97}
Nikolai~V. Ivanov.
\newblock Automorphism of complexes of curves and of {T}eichm\"uller spaces.
\newblock {\em Internat. Math. Res. Notices}, (14):651--666, 1997.

\bibitem{IvanovMcCarthy99}
Nikolai~V. Ivanov and John~D. McCarthy.
\newblock On injective homomorphisms between {T}eichm\"uller modular groups.
  {I}.
\newblock {\em Invent. Math.}, 135(2):425--486, 1999.

\bibitem{KentEtAl06}
Richard Kent, Chris Leininger, and Saul Schleimer.
\newblock Trees and mapping class groups.
\newblock {\em J. Reine Angew. Math.}
\newblock To appear.
  \href{http://arxiv.org/abs/math/0611241}{arXiv:math/0611241}.

\bibitem{Korkmaz99}
Mustafa Korkmaz.
\newblock Automorphisms of complexes of curves on punctured spheres and on
  punctured tori.
\newblock {\em Topology Appl.}, 95(2):85--111, 1999.

\bibitem{Luo00}
Feng Luo.
\newblock Automorphisms of the complex of curves.
\newblock {\em Topology}, 39(2):283--298, 2000.
\newblock \href{http://arxiv.org/abs/math/9904020}{arXiv:math/9904020}.

\bibitem{McCullough91}
Darryl McCullough.
\newblock Virtually geometrically finite mapping class groups of
  {$3$}-manifolds.
\newblock {\em J. Differential Geom.}, 33(1):1--65, 1991.

\bibitem{McCullough02}
Darryl McCullough.
\newblock Homeomorphisms which are {D}ehn twists on the boundary.
\newblock 2002.
\newblock
  \href{http://www.math.ou.edu/~dmccullough/research/pdffiles/dehn.pdf}{http:/%
/www.math.ou.edu/{$\sim$}dmccullough/research/pdffiles/dehn.pdf}.

\bibitem{RafiSchleimer08}
Kasra Rafi and Saul Schleimer.
\newblock Curve complexes are rigid, 2008.
\newblock \href{http://arxiv.org/abs/0710.3794}{arXiv:math/0710.3794}.

\bibitem{Wajnryb98}
Bronis{\l}aw Wajnryb.
\newblock Mapping class group of a handlebody.
\newblock {\em Fund. Math.}, 158(3):195--228, 1998.

\end{thebibliography}
\end{document}